\numberwithin{equation}{section}
\newtheorem{theorem}{Theorem}[section]
\newtheorem{lemma}[theorem]{Lemma}
\newtheorem{remark}[theorem]{Remark}
\newtheorem{definition}[theorem]{Definition}
\newtheorem{proposition}[theorem]{Proposition}
\numberwithin{equation}{section}
\renewcommand{\vec}[1]{\mbox{\boldmath$#1$}}
\begin{document}

\title{Non-uniqueness of Leray weak solutions of the forced MHD equations
%Nonsymmetric sign-changing solution for the overdetermined eigenvalue problem
\thanks{Wang was supported by National Key R$\&$D Program of China
(No. 2022YFA1005601), National Natural Science Foundation of China (No. 12371114) and Outstanding Young
foundation of Jiangsu Province (No. BK20200042).
Xu was supported by the Postdoctoral Science Foundation of China (2023M731381).
Zhang was supported by National Natural Science Foundation of China (No. 12301133), the Postdoctoral Science Foundation of China (No. 2023M741441) and Jiangsu Education Department (No. 23KJB110007).}}

\author{
Jun Wang\thanks{School of Mathematical Sciences, Jiangsu University, Zhenjiang, 212013, P.R. China
\newline
\text{~~~~ E-mail}: wangmath2011@126.com},
Fei Xu
\thanks{School of Mathematical Sciences, Jiangsu University, Zhenjiang, 212013, P.R. China
\newline
\text{~~~~ E-mail}:  xufeiujs@126.com},
Yong Zhang\thanks{Corresponding author.
\newline
\text{~~~~~School of Mathematical Sciences, Jiangsu University, Zhenjiang, 212013, P.R. China}
\newline
\text{~~~~ E-mail}: 18842629891@163.com}
}
\date{}
\maketitle

\renewcommand{\abstractname}{Abstract}

\begin{abstract}
In this paper, we exhibit non-uniqueness of Leray weak solutions of the forced magnetohydrodynamic (MHD for short) equations. Similar to
  the solutions constructed in \cite{ABC2}, we first find a special steady solution of ideal MHD equations
  whose linear unstability was proved in \cite{Lin}. It is possible to perturb the unstable scenario of ideal MHD to 3D viscous and resistive MHD equations, which can be regarded as the first unstable "background" solution. Our perturbation argument is
  based on the spectral theoretic approach \cite{Kato}. The second solution we would construct is a trajectory on the unstable manifold associated to the unstable steady solution. It is worth noting that these solutions live precisely on the borderline of the known well-posedness theory.
\end{abstract}

\emph{Keywords:} MHD equations, Leray weak solution, non-uniqueness, megneto-rotational instability(MRI), constraction mapping principle

\emph{AMS Subjection Classification(2020):} 76B15, 47J15, 76B03.

%\tableofcontents
% ------------------------------------------------------------------------------------------------------------Introduction

\section{\bf Introduction and main results}
 Consider the three-dimensional magnetohydrodynamic (MHD for short) system on $\mathbb{R}^{3}$
\begin{equation}\label{e1.1}
\left\{\begin{array}{llll}
\partial_{t}v +v\cdot \nabla v -\Delta v +\nabla p= H\cdot \nabla H + f_{1}\\
\partial_{t}H +v\cdot \nabla H -\Delta H= H\cdot \nabla v+ f_{2} ,\\
\text{div} v=\text{div} H= 0,
\end{array}\right.
\end{equation}
where
$v(t,x): (0,T)\times \mathbb{R}^{3} \rightarrow \mathbb{R}^{3}$, $H(t,x): (0,T)\times \mathbb{R}^{3} \rightarrow \mathbb{R}^{3}$,
$p(t,x)\in \mathbb{R}$ correspond to the velocity field, magnetic field and pressure of the fluid, respectively, and $f=(f_{1}, f_{2})$ is the given body force. We impose the initial condition
\begin{equation}\label{e1.2}
(v,H)(0,x) = (v^{0}, H^{0})(x), \quad x\in \mathbb{R}^{3}.
\end{equation}

Among various hydrodynamic models, the viscous and resistive MHD system is a canonical macroscopic model to describe the motion of conductive fluid, such as plasma or liquid metals, under a complicated interaction between the electromagnetic phenomenon and fluid dynamical phenomenon (see \cite{TLQ}). We refer the reader to \cite{DBN,{DBp},{PAD},{MSR}} for more physical interpretations of the MHD system. In particular, in the case without magnetic fields, the system (\ref{e1.1}) would reduce to the classical incompressible Navier-Stokes equation (NSE for short).
When ignoring viscous and resistive effects, the system (\ref{e1.1}) would become the ideal MHD system, namely,
  \begin{equation}\label{e1.3}
\left\{\begin{array}{llll}
\partial_{t}v +v\cdot \nabla v +\nabla p= H\cdot \nabla H \\
\partial_{t}H +v\cdot \nabla H = H\cdot \nabla v ,\\
\text{div} v=\text{div} H= 0,
\end{array}\right.
\end{equation}
 with the initial condition
\begin{equation}\label{e1.4}
(v,H)(0,x) = (v^{0}, H^{0})(x),\quad  x\in \mathbb{R}^{3}
\end{equation}
The incompressible ideal MHD system (\ref{e1.3}) is the classical macroscopic model coupling the Maxwell equations to the evolution of an electrically conducting incompressible fluid \cite{DBp,PAD}. In the case $H = 0$, it's obvious that (\ref{e1.3}) reduces to the Euler equation.

The well-posedness problem of NSE and MHD has been extensively studied in the literature. For the initial data with finite energy, the existence of global weak solution $u$ to NSE was first proved by Leray \cite{JL34} in 1934 and later by Holf \cite{Hf51} in 1951 in bounded domains, which satisfies
$u\in \mathcal{C}_{weak}([0, +\infty); L^{2}({ \Omega }))\cap L^{2}([0, +\infty); \dot{H}^{1}({\Omega }))$ and obeys the following energy inequality
 \begin{equation}\label{1.5}
  \|u(t)\|_{L^{2}}^{2}+ 2\nu \int_{t_{0}}^{t}\|\nabla u(s)\|_{L^{2}}^{2} ds \leq \|u(t_{0})\|_{L^{2}}^{2}
 \end{equation}
 for any $t > 0$ and a.e. $t_{0}\geq 0$.
 Similar to the Navier-Stokes equation, a global weak solution (in the sense of Definition \ref{def1}) and local strong solution to (\ref{e1.1}) with the initial boundary value condition were constructed by Duvant and Lions \cite{DJL}. Later, the results were extended to the Cauchy problem by Sermange and Terman \cite{MSR}, where their main tools are regularity theory of the Stokes operator and the energy method.

Now let us first recall the notion of Leray weak solution of MHD system (\ref{e1.1}) for each divergence-free initial value $(v^{0}, H^{0})(x)$ and
body force $f_{i}\in L_{t}^{1}L_{x}^{2} (i= 1,2) $. Denote that $L_{\sigma}^{2}(\mathbb{R}^{3})$ is the space of divergence-free vector fields in $L^{2}(\mathbb{R}^{3})$.
\begin{definition}\label{def1}
The pair $(v, H) \in L^{\infty}(0,T; L_{\sigma}^{2}(\mathbb{R}^{3}))\cap L^{2}(0,T; W^{1,2}(\mathbb{R}^{3}))$ is called as Leray
weak solution in $[0,T)\times \mathbb{R}^{3}$ if there holds that  \\
(1) The pair $(v, H)$ solves (\ref{e1.1}) in the distribution sense
\begin{align}\label{e1.6}
 &\int_{0}^{T}\int_{\mathbb{R}^{3}} v \cdot\partial_{t}\varphi - v\cdot \nabla v \cdot \varphi+ H\cdot \nabla H \cdot \varphi - \nabla u \cdot\nabla \varphi dxdt \nonumber \\
 &= - \int_{\mathbb{R}^{3}} v^{0}\cdot \varphi (\cdot,0) dx - \int_{0}^{T}\int_{\mathbb{R}^{3}} f_{1} \varphi dtdx
 \nonumber \\
 &\int_{0}^{T}\int_{\mathbb{R}^{3}} H \cdot\partial_{t}\phi- v\cdot \nabla H \cdot \phi+ H\cdot \nabla v \cdot \phi - \nabla H \cdot\nabla \varphi dxdt \nonumber \\
 &= - \int_{\mathbb{R}^{3}} H^{0}\cdot \phi (\cdot,0) dx -\int_{0}^{T}\int_{\mathbb{R}^{3}} f_{2}\phi dtdx
\end{align}
for all $\varphi,\phi \in C_{0}^{\infty}([0,T)\times \mathbb{R}^{3})$ and the initial data $v^{0}, H^{0}\in L_{\sigma}^{2}(\mathbb{R}^{3})$. \\
(2) Such solution pair $(v, H)$ satisfies the energy inequality
\begin{align}\label{e1.7}
&\frac{1}{2} \int_{\mathbb{R}^{3}} |v(t)|^{2}+|H(t)|^{2} dx + \int_{0}^{T}\int_{\mathbb{R}^{3}} |\nabla v(t)|^{2}+|\nabla H(t)|^{2} dxdt  \nonumber \\
&\leq \int_{\mathbb{R}^{3}} |v^{0}|^{2}+|H^{0}|^{2} dx+\int_{0}^{T}\int_{\mathbb{R}^{3}}  f_{1} v + f_{2} H dxdt
\end{align}
\end{definition}

It's known that Leray weak solutions of MHD system (\ref{e1.1}) enjoy several nice properties, including the partial regularity and weak-strong uniqueness. However, the uniqueness of the weak solutions still remains one of the most challenging problems. In this paper, we will answer the uniqueness question in the negative. Our main result can be stated as follows.
\begin{theorem}\label{th1}
 There exist two distinct Leray weak solutions $(v_1, H_1)$, $(v_{2}, H_{2})$ to the forced viscous and resistive MHD system (\ref{e1.1}) on $ (0,T)\times \mathbb{R}^{3}$ with body force $(f_{1}, f_{2} )\in (L_{t}^{1}L_{x}^{2})^{2}$.
\end{theorem}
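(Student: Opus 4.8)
The plan is to follow the strategy announced in the abstract, adapting the Navier--Stokes non-uniqueness construction of Albritton--Brué--Colombo \cite{ABC2} to the MHD setting. The overarching idea is that non-uniqueness will be produced by a single unstable steady state together with a trajectory that escapes along its unstable manifold; the difference of the two trajectories then furnishes two distinct Leray solutions sharing the same initial data and the same forcing. I would organize the argument around four self-similar/spectral ingredients, working in the self-similar variables $\xi = x/\sqrt{t}$, $\tau = \log t$ so that the scaling-critical nature of the construction (which explains why the solutions sit ``on the borderline of the known well-posedness theory'') becomes transparent.

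\emph{Step 1: the background solution.} First I would exhibit the special steady solution $(\bar v, \bar H)$ of the \emph{ideal} MHD system \eqref{e1.3} whose linear instability was established in \cite{Lin} via the magneto-rotational instability mechanism, and promote it to a forced steady (or self-similar) solution $(\bar v, \bar H)$ of the full viscous--resistive system \eqref{e1.1}. Since $(\bar v,\bar H)$ will not exactly solve the dissipative system, the viscous and resistive terms are absorbed into a suitable time-independent body force $(f_1,f_2)$, and one must check that this force lies in $(L^1_t L^2_x)^2$ so that Theorem~\ref{th1} is meaningful; the self-similar profile's decay is what guarantees this integrability near $t=0$.

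\emph{Step 2: the linearized operator and its unstable eigenvalue.} Writing a perturbation $(u,B)$ of the background and passing to similarity variables, the linearized evolution is governed by an operator $L = -L_{\mathrm{ss}} + L_{\mathrm{MHD}}$, where $L_{\mathrm{ss}}$ is the self-similar dissipative part and $L_{\mathrm{MHD}}$ encodes the advection and Lorentz-force coupling around $(\bar v,\bar H)$. The heart of the perturbation argument, following the spectral-theoretic approach of Kato \cite{Kato}, is to show that $L$ possesses an eigenvalue $\lambda$ with $\operatorname{Re}\lambda > 0$ and a corresponding smooth, rapidly decaying eigenfunction $(\eta,\theta)$. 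I would obtain this by treating the viscous/resistive terms as a relatively compact perturbation of the ideal linearized operator from \cite{Lin}, so that the unstable point spectrum survives the dissipative regularization; this is where the MHD coupling differs essentially from Navier--Stokes, since one must track a $2\times 2$ system and verify that the magnetic coupling does not neutralize the unstable mode.

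\emph{Step 3: construction of the trajectory on the unstable manifold.} With the unstable eigenpair in hand, I would construct a genuine nonlinear solution of the form $(u,B)(\tau) = a\,e^{\lambda \tau}(\eta,\theta) + (\text{higher order})$, emanating from the steady state as $\tau \to -\infty$ (i.e.\ $t\to 0^+$), via a fixed-point argument in an exponentially weighted space. This is exactly the ``contraction mapping principle'' flagged in the keywords: one sets up the Duhamel integral for the nonlinear remainder, uses semigroup bounds $\|e^{tL}\|$ that isolate the unstable subspace, and closes the estimate for $|a|$ small. The principal obstacle I anticipate is this step: proving the requisite smoothing/decay estimates for the semigroup $e^{\tau L}$ generated by the full $2\times 2$ MHD-linearized operator, and controlling the bilinear MHD nonlinearity (including the $H\cdot\nabla H$ Lorentz term and the magnetic stretching $H\cdot\nabla v$) in the weighted norm, since the coupling produces cross terms absent in the scalar Navier--Stokes analysis.

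\emph{Step 4: conclusion.} Finally I would set $(v_1,H_1) = (\bar v,\bar H)$ (the background) and $(v_2,H_2) = (\bar v,\bar H) + (u,B)$ (the trajectory on the unstable manifold), both expressed back in the original variables $(t,x)$. Both solve \eqref{e1.1} with the same forcing $(f_1,f_2)$ and, crucially, share the initial data because the perturbation $(u,B)\to 0$ in $L^2$ as $t\to 0^+$ thanks to the factor $e^{\lambda\tau}=t^{\lambda}$ with $\operatorname{Re}\lambda>0$. It then remains to verify that both solutions lie in the Leray class $L^\infty_t L^2_{\sigma}\cap L^2_t W^{1,2}$ and satisfy the energy inequality \eqref{e1.7}, which follows from the decay of the self-similar profiles together with the a priori energy estimate for the constructed trajectory; this establishes two distinct Leray weak solutions and proves Theorem~\ref{th1}.
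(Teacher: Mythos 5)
Your overall architecture (Steps 1, 3, 4) matches the paper: a forced self-similar background, an unstable-manifold trajectory built by Duhamel plus contraction in exponentially weighted $H^N$ spaces, and the conclusion that both solutions share zero initial data at $t=0^+$ because $e^{\lambda\tau}=t^{\lambda}\to 0$. However, Step 2 — the heart of the argument — has a genuine gap. You propose to obtain the unstable eigenvalue of the viscous self-similar linearization by ``treating the viscous/resistive terms as a relatively compact perturbation of the ideal linearized operator from \cite{Lin}.'' This mechanism does not work, for two reasons. First, it is analytically backwards: the dissipative term $\Delta_\xi$ (and the drift $\xi\cdot\nabla_\xi$) are of higher order than the first-order transport operator coming from the ideal linearization, so they are not relatively compact, nor even relatively bounded, with respect to it; and in any case relatively compact perturbations only preserve the \emph{essential} spectrum (Weyl), not isolated eigenvalues, so ``the unstable point spectrum survives'' would not follow even if the compactness held. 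Second, and more fundamentally, your setup has no small parameter: with an $O(1)$ background, the dissipation enters at full strength and can perfectly well stabilize the ideal unstable mode — recall that the unforced self-similar operator $L^{0}_{ss}$ has spectrum contained in $\{\operatorname{Re}\lambda\le -\tfrac14\}$, so instability of the ideal operator alone gives nothing for the viscous problem.

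The paper's (and Albritton--Bru\'e--Colombo's) key idea, which your proposal omits, is the amplification of the background: the profile is taken to be $\beta\Xi_0=(\beta V_0,\beta W_0)$ with $\beta\gg 1$. Dividing the linearized operator $L^{\beta}_{ss}$ by $\beta$ produces
\begin{equation*}
-T_{\beta}\Xi=-\tfrac{1}{\beta}\bigl[\tfrac12(1+\xi\cdot\nabla_{\xi})+\Delta_{\xi}\bigr]\Xi
+\mathbb{P}\bigl(\mathfrak{B}(\Xi_{0},\Xi)+\mathfrak{B}(\Xi,\Xi_{0})\bigr),
\end{equation*}
so the dissipative and drift terms now carry the factor $1/\beta$ and become a genuinely small (continuous in $1/\beta$, in the sense of estimate (\ref{e4.6})) perturbation of the ideal operator $T_\infty$. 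Kato's eigenvalue-continuity lemma, applied on the finite-dimensional unstable subspace furnished by Theorem \ref{th3.1}, then yields an eigenvalue $\lambda_\beta$ of $T_\beta$ with $|\lambda_\beta-\Lambda_0|<\varepsilon$ for $\beta$ large, hence the unstable eigenvalue $\widetilde{\lambda_\beta}=\beta\lambda_\beta>0$ of $L^{\beta}_{ss}$. This large growth rate $\beta\lambda_\beta$, dominating the $O(1)$ dissipation, is also what lets the nonlinear contraction estimates of Section 5 close. A corresponding adjustment is needed in your Step 1: the force must compensate not only the viscous/resistive terms but also the self-similar drift $-\tfrac{\beta}{2}(1+\xi\cdot\nabla_\xi)\Xi_0$, and it scales like $\beta$ (respectively $\beta^2$ for the quadratic part), which is harmless for the $L^1_tL^2_x$ requirement but must be said.
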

   Recall the scaling property of MHD equation, if $(v(x,t), H(x,t), p(x,t))$ is a solution of (\ref{e1.1}) with
   force $f_{i}(x,t)(i=1,2)$, then for any $\lambda > 0$,
    \begin{equation}
    v^{\lambda}(x,t)=\lambda v(\lambda x, \lambda^{2}t),~~~ H^{\lambda}(x,t)=\lambda H(\lambda x, \lambda^{2}t),~~~ p^{\lambda}(x,t)=\lambda^{2} p(\lambda x, \lambda^{2}t)
    \end{equation}
    is also a solution with force $f_{i}(x,t)=\lambda^{3} f_{i}(\lambda x, \lambda^{2}t) $. A particular class of solution are the
   $self$-$similar$ solutions, that is, the solutions of MHD equations on $\mathbb{R}^{3}\times \mathbb{R}$ invariant under the scaling symmetry.
   We will finish the proof of Theorem \ref{th1} by following the similar techniques developed in recent work of Albritton-Bru\'{e}-Colombo \cite{ABC2}. However, it is worth noting that the work is not
 just a parallel extension. The addition of magnetic field would bring
 some new difficulties. Especially, it is vital to construct a smooth and decaying unstable steady
 state of the forced MHD equations in three dimension.
 Considering the similarity variables
 \begin{equation}\label{e1.8}
 \xi= \frac{x}{\sqrt{t}},~~~\tau= log t,
 \end{equation}
the solutions can be expressed as follows in similarity variables by
\begin{equation}
\begin{split}
  &v(x,t)=\frac{1}{\sqrt{t}}V(\xi,\tau ),~~ H(x,t)=\frac{1}{\sqrt{t}}W(\xi,\tau ), \\
  &f(x,t)= \frac{1}{t^{\frac{3}{2}}}F(\xi,\tau),
  ~~p(x,t)= \frac{1}{t}P(\xi,\tau ) \nonumber
\end{split}
\end{equation}
 If $(v,H,p)$ satisfies (\ref{e1.1}) with force $f_{i}(x,t)(i=1,2)$, then profile $(V, W, P)$ satisfies
 the time-dependent Leray equations
 \begin{equation}\label{e1.9}
 \left\{\begin{array}{llll}
\partial_{\tau}V-~\frac{1}{2}(1+\xi\cdot \nabla_{\xi})V+ V\cdot\nabla V-W\cdot\nabla W - \Delta_{\xi} V+\nabla P= F_{1}, \\
\partial_{\tau}W -~\frac{1}{2}(1+\xi\cdot \nabla_{\xi})W + V\cdot\nabla W-W\cdot\nabla V-\Delta_{\xi} W= F_{2}.
 \end{array}\right.
 \end{equation}
  A special self-similar solutions of (\ref{e1.1}) correspond to steady state of (\ref{e1.9}). We will find the first weak solution $(\overline{V}(\xi),\overline{W}(\xi))$ of (\ref{e1.9}), which is linearly unstable steady solution. That is to say, the following linearized MHD equations around the steady state $(\overline{V}, \overline{W})$
 \begin{equation}\label{e1.10}
 \left\{\begin{array}{llll}
\partial_{\tau}V-(\frac{1}{2}+ \frac{1}{2}\xi\cdot \nabla_{\xi}+\Delta_{\xi}) V+ \mathbb{P} (\overline{V}\cdot\nabla V+V\cdot\nabla \overline{V}-\overline{W}\cdot\nabla W-W\cdot\nabla \overline{W})= 0, \\
\partial_{\tau}W-(\frac{1}{2}+ \frac{1}{2}\xi\cdot \nabla_{\xi}+\Delta_{\xi})W + \mathbb{P} (\overline{V}\cdot\nabla W+V\cdot\nabla \overline{W}-\overline{W}\cdot\nabla V-W\cdot\nabla \overline{V})= 0
 \end{array}\right.
 \end{equation}
have a nontrivial solution of the form $(V(\xi,\tau), W(\xi, \tau))=
(e^{\lambda \tau}\widetilde{V}(\xi), e^{\lambda \tau}\widetilde{W}(\xi))$ with $\lambda > 0$.
In addition, it follows from Section 3 that we can rewrite (\ref{e1.10}) as
 \begin{equation}\label{e1.11}
\partial_{\tau}\Xi - \frac{1}{2}(1+\xi\cdot \nabla_{\xi})\Xi - \Delta_{\xi}\Xi +\mathbb{P}(\mathfrak{B}(\overline{\Xi}, \Xi)+ \mathfrak{B}( \Xi, \overline{\Xi}))=0,
\end{equation}
where $\Xi=( V(\xi, \tau), W(\xi, \tau))$ and $\overline{\Xi}=(\overline{V}(\xi), \overline{W}(\xi)).$
We can also say $\overline{\Xi}$ is linearly unstable for the dynamics of (\ref{e1.11}) if there exists an unstable
 eigenvalue for the linearized operator
 \begin{equation*}
   -L_{ss}\Xi = - \frac{1}{2}(1+\xi\cdot \nabla_{\xi})\Xi - \Delta_{\xi}\Xi  + \mathbb{P}(\mathfrak{B}(\overline{\Xi}, \Xi)+ \mathfrak{B}( \Xi, \overline{\Xi})).
   \end{equation*}
The second solution of (\ref{e1.9}) is a trajectory on the unstable manifold associated to the most unstable eigenvalue which will be constructed in Section 5.

 Before making a comment on our result in more detail, let us review the literature on some significant progress towards the non-uniqueness of the Euler equations and Navier-Stokes equations.
In recent two papers \cite{MV1} and \cite{MV2}, Vishik answered the non-uniqueness of Euler equation by constructing two Leray-Holf weak solutions. One solution is an explicit unstable steady vortices for the Euler dynamics in similarity variables and another is a trajectory on the unstable manifold associated to the unstable steady state, which lives precisely on the borderline of the known well-posedness theory. Later, Albritton et.\cite{ABC1} followed the strategy of Vishik and made some improvement.
Motivated by the Vishik's work, Albritton et.\cite{ABC2} then constructed a vortex ring which 'lifts' Vishik's unstable vortex to three dimensions, proving the
nonuniqueness of Navier-Stokes equations in the same way.
In addition, we also would like to mention two of particular important work. The first contribution was made by Jia \cite{JS14,JS15,GS17}, who developed a program towards non-uniqueness of Navier-stokes equation without external force. Compared to
Vishik's approach, the self-similar solutions in \cite{JS15} are far from explicit. Therefore, the spectral condition therein seems difficult to verify analytically, although it has been checked with non-rigorous numerics in \cite{JS15}. Second, Buckmaster and Vicol constructed non-unique distributional solutions of the Navier-Stokes equations in \cite{BV} (see also \cite{BCV}) and the ideal MHD \cite{BBV} equations with finite kinetic energy via the powerful method of convex integration. Recently, the author in \cite{YZD,YZL} proved the sharp non-uniqueness of weak solutions to 3D viscous and resistive MHD with finite kinetic energy via method of convex integration. However, these results mentioned above using convex integration schemes are far from reaching the regularity $\nabla u \in L_{t,x}^{2}$.

In this paper, we will establish the non-uniqueness of MHD equaitons (\ref{e1.1}) based on the Leray equations (\ref{e1.9}). For better proceeding, we allow a force in (\ref{e1.1}), which gives us the freedom to search for a more explicit unstable profile. The rest of this paper is arranged as follows: In section 2, we mainly review the linear unstable of the axisymmetric ideal MHD equation around a rotating flow $(v_{0}, H_{0})$ in \cite{Lin},
which contributes to constructing a unstable steady state profile of (\ref{e1.9}) by choosing a suitable force. In section 3, we will show that the linearly unstable properties of axisymmetric case in Theorem 2.1 can be extended to the more general case. In section 4, we perturb this ideal MHD unstable scenario to 3D viscous and resistive MHD equations based on the spectral theoretic approach \cite{Kato}. In other words, we will establish the linear instability of MHD equations. In section 5, we use the linear instability proved in Theorem 4.1 to construct the second Leray weak solution of the forced MHD equations.

\section{ Preliminaries}
~~~~%In this section, we are to find a smooth and decaying unstable steady-state of the forced MHD equations. This task is far from elementary, there is no general tool to construct such unstable solutions.
Firstly, let us pay attention to one simple axisymmetric steady solution $(v_{0}, H_{0})$ among explicit solutions of the incompressible ideal MHD equations (\ref{e1.3}) (see \cite{Me93,HT98}), where $(v_{0}, H_{0})$ is a rotating flow with a vertical magnetic field, that is,
\begin{equation}\label{e2.1}
\left\{\begin{array}{llll}
v_{0}(x)=v_{0}(r) e_{\theta}= r\omega(r)e_{\theta}, \\
H_{0}(x)=\epsilon b(r)e_{z},
\end{array}\right.
\end{equation}
where $\epsilon\neq 0$ is a constant, $(r,\theta,z)$ are the cylindrical coordinates with $r = \sqrt{x_{1}^{2}+x_{2}^{2}}$, $z= x_{3} $, $(e_{r}, e_{\theta},e_{z})$ are unit vectors along $r,\theta,z $ directions, $ \omega(r) \in C^{3}(R_{1}, R_{2}) $ is the angular velocity of the rotating fluid, the magnetic profile $ b(r) \in  C^{3}(R_{1}, R_{2})(0\leq R_{1}< R_{2}= +\infty) $ has a positive lower bound.
 We will require that $(\omega(r), b(r))$ has a extra decay at infinity, which guarantee the finite energy. In \cite{Lin}, they give a rigorous proof of the sharp linear instability criteria of rotating flows (\ref{e2.1}) with magnetic fields.
 This smooth and decaying unstable scenario (\ref{e2.1}) can be regarded as the unstable "background" solution of (\ref{e1.1}) in similarity variables by choosing a non-standard force. In addition, the linear unstable properties of the rotating flow (\ref{e2.1}) play an important role in constructing non-unique energy weak solutions of (\ref{e1.1}).

 The stability criterion for rotating flows with a magnetic field is generally different from the classical Rayleigh criterion for rotating flows without a magnetic field.
The influence of a vertical and uniform magnetic field (i.e.,$b(r)$ =constant) on the stability of the rotating flows was first studied by Velikhov \cite{EPV} and Chandrasekhar \cite{SC}, who derived a sufficient condition for linear stability of a rotating flow in the limit of vanishing magnetic fields that the square of the angular velocity increases outwards, i.e.,
\begin{equation}\label{e2.2}
 \partial_{r}(\omega^{2}) > 0,~~ \text{for~all} ~ r\in (R_{1}, R_{2}).
\end{equation}
If the stability condition (\ref{e2.2}) fails, it was suggested in \cite{EPV} and \cite{SC} that there is linear instability with small magnetic fields and they also showed that the unstable eigenvalues are necessarily real. Such instability of rotating flows induced by small magnetic fields is called magneto-rotational instability (MRI) in the literature, which has wide application in astrophysics, particularly to the turbulence and enhanced angular momemtum transport in astrophysical accretion disks. We refer to the reviews \cite{SB,BH91,BH98,JMS} for the history and results of this important topic.

  In \cite{Lin}, they answered three natural questions for MRI: Firstly, they give a sharp instability criterion for general vertical magnetic fields and angular velocities. Secondly, they show that
  MRI is due to discrete unstable spectrum, which is finite. Thirdly, they also prove that the sharp stability or instability criteria can imply nonlinear stability or instability respectively. The main proof is based on a local dispersion analysis and a framework of separable Hamiltonian systems which we will sketch below. The authors \cite{Lin} considered the axisymmetric solution of the system (\ref{e1.3}) in the cylinder
  ${ \Omega}:= \{(x_{1}, x_{2}, x_{3})\in \mathbb{R}^{3} |R_{1}\leq \sqrt{x_{1}^{2}+x_{2}^{2}+x_{3}^{2}}\leq R_{2}, x_{3}\in T_{2\pi}\}, 0\leq R_{1}< R_{2}\leq\infty$.
  In our paper, we will consider the case in $\mathbb{R}^{3} $, that is, $0= R_{1}< R_{2}=\infty$.
  In the cylindrical coordinates, we denote
  \begin{equation*}
  H(t,r,z)= H_{r}(t,r,z)e_{r}+H_{\theta}(t,r,z)e_{\theta}+
  H_{z}(t,r,z)e_{z}
  \end{equation*}
 and
 \begin{equation*}
 v(t,r,z)= v_{r}(t,r,z)e_{r}+v_{\theta}(t,r,z)e_{\theta}+
  v_{z}(t,r,z)e_{z}
 \end{equation*}

 Due to $ \text{div} H=0$, we can define magnetic potential $\psi(t,r,z)$ of $H(t,r,z)$ by
\begin{equation*}
\left\{\begin{array}{llll}
 H_{r}(t,r,z)=\frac{\partial_{z}\psi}{r}, \\
 H_{z}(t,r,z)=-\frac{\partial_{r}\psi}{r},\\
 -\frac{1}{r}\partial_{r}(\frac{1}{r}\partial_{r}\psi)
 -\frac{1}{r^{2}}\partial_{z}^{2}\psi=
 \frac{1}{r}\partial_{r}H_{z}-\frac{1}{r}\partial_{z}H_{r}.
\end{array}\right.
\end{equation*}
The system (\ref{e1.3}) can be rewritten in the cylindrical coordinates as
\begin{equation}\label{e2.3}
\left\{\begin{array}{llll}
\partial_{t}v_{r}+ \partial_{r}p= \frac{\partial_{z}\psi}{r}\partial_{r}(\frac{\partial_{z}\psi}{r})-\frac{\partial_{r}\psi}{r}\partial_{r}(\frac{\partial_{z}\psi}{r})
- \frac{H_{\theta}H_{\theta}}{r}-v_{r}\partial_{r}v_{r}-v_{z}\partial_{z}v_{r}+\frac{v_{\theta}v_{\theta}}{r},\\
\partial_{t}v_{\theta}= \frac{\partial_{z}\psi}{r}\cdot\partial_{r}H_{\theta}-\frac{\partial_{r}\psi}{r}\partial_{z}H_{\theta}
+ \frac{H_{\theta}\frac{\partial_{z}\psi}{r}}{r}
-v_{r}\partial_{r}v_{\theta}-v_{z}\partial_{z}v_{\theta}+\frac{v_{\theta}v_{r}}{r},\\
\partial_{t}v_{z}+ \partial_{z}p= \frac{\partial_{z}\psi}{r}\partial_{r}(-\frac{\partial_{r}\psi}{r})-\frac{\partial_{r}\psi}{r}\partial_{r}(-\frac{\partial_{r}\psi}{r})
-v_{r}\partial_{r}v_{z}-v_{z}\partial_{z}v_{z},\\
\partial_{t}\psi=-v_{r}\partial_{r}\psi-v_{z}\partial_{z}\psi,\\
\partial_{t}H_{\theta}= \frac{\partial_{z}\psi}{r}\partial_{r}(v_{\theta})-\frac{\partial_{r}\psi}{r}\partial_{r}(v_{\theta})
- \frac{H_{\theta}v_{r}}{r}-v_{r}\partial_{r}H_{\theta}-v_{z}\partial_{z}H_{\theta}-\frac{v_{\theta} \frac{\partial_{z}\psi}{r}}{r},\\
\frac{1}{r}\partial_{r}(ru_{r})+\partial_{z}u_{z}=0.
\end{array}\right.
\end{equation}
For steady state, we can take
\begin{equation*}
\psi_{0}(r)= -\epsilon \int_{0}^{r} sb(s) ds.
\end{equation*}
Now let the perturbations be
\begin{align}
&u(t,x)= v(t,x)- v_{0}(x);&B_{\theta}(t,x)= H_{\theta}(t,x);  \nonumber\\
&\mathcal{P}(t,x)= p(t,x)-p_{0}(x);&\varphi(t,r,z)= \psi- \psi_{0}. \nonumber
\end{align}

The linearized MHD system around a given steady state $(v_{0}(x),H_{0}(x), p_{0}(x))$ in the cylindrical coordinates can be
reduced to the following system

\begin{equation}\label{e2.4}
\left\{\begin{array}{llll}
\partial_{t}u_{r}= \varepsilon b(r)\partial_{z}(\frac{\partial_{z}\varphi}{r})+ 2\omega(r)v_{\theta}-\partial_{r}\mathcal{P},\\
\partial_{t}u_{\theta}=\varepsilon b(r)\partial_{z}(B_{\theta})-\frac{u_{r}}{r}\partial_{r}(r^{2}\omega^{2}(r)),\\
\partial_{t}u_{z}= \varepsilon b(r)\partial_{z}(-\frac{\partial_{r}\varphi}{r})-\partial_{z}\mathcal{P} +\frac{\varepsilon \partial_{r}b(r)}{r}\partial_{z}\varphi,\\
\partial_{t}\varphi=\varepsilon rb(r)u_{r},\\
\partial_{t}B_{\theta}=\varepsilon rb(r)\partial_{z}u_{\theta}+\partial_{r}\omega(r)\partial_{z}\varphi,\\
\frac{1}{r}\partial_{r}(ru_{r})+\partial_{z}u_{z}=0.
\end{array}\right.
\end{equation}
 We impose the system (\ref{e2.4}) with conditions
 \begin{equation}\label{e2.5}
 \left\{\begin{array}{llll}
 (u_{r}, u_{\theta},u_{z},\varphi,B_{\theta})(t,r,z)|_{t=0}=(u_{r}^{0}, u_{\theta}^{0},u_{z}^{0},\varphi^{0},B_{\theta}^{0})(r,z),\\
 (u_{r}, u_{\theta},u_{z},\varphi,B_{\theta})(t,r,z)\rightarrow (0,0,0,0,0),~~~\text{as}~~r\rightarrow \infty,\\
 (u_{r}, u_{\theta},u_{z},\varphi,B_{\theta})(t,r,z)=(u_{r}, u_{\theta},u_{z},\varphi,B_{\theta})(t,r,z+2\pi).
 \end{array}\right.
 \end{equation}

It is obvious that the linearized axisymmetric MHD equations (\ref{e2.4}) can be written in a Hamiltonian form
\begin{equation} \label{e2.6}
\begin{matrix}
 \frac{d}{dt}\begin{pmatrix}
 u_{1}\\
 u_{2}
 \end{pmatrix}= \mathbf{J}\mathbf{L}\begin{pmatrix}
 u_{1}\\
 u_{2}
 \end{pmatrix}.
 \end{matrix}
 \end{equation}
 where $ u_{1}= (u_{\theta}+\frac{\partial_{r}\omega(r)}{\varepsilon b(r)}\varphi, \varphi)$, $u_{2}= (\vec{u}, B_{\theta})$ with
 $ \vec{u}=(u_{r}, u_{z})$.
  Consider the energy spaces $\mathbf{X}= X\times Y$ with $X= L^{2}(\mathbb{R}^{3})\times Z, Y= L_{\sigma}^{2}(\mathbb{R}^{3})\times L^{2}(\mathbb{R}^{3})$,
  where $L^{2}(\mathbb{R}^{3})$ is the cylindrically symmetric $L^{2}$ space on $\mathbb{R}^{3}$,
 $$
 L_{\sigma}^{2}(\mathbb{R}^{3}):= \{ \vec{u}= u_{r}(r,z)e_{r}+u_{z}(r,z)e_{z}
 \in L^{2}(\mathbb{R}^{3})~|~ \text{div} u=0\}.
 $$
and
$$
Z= \{ \varphi(r,z)\in H_{mag}^{1}(\mathbb{R}^{3})|~ \|\varphi\|_{H_{mag}^{1}(\mathbb{R}^{3})}<\infty \}
$$
with
$$
\|\varphi\|_{H_{mag}^{1}(\mathbb{R}^{3})}= \left(\int_{\Omega}\frac{1}{r^{2}}|\partial_{z}\varphi|^{2}
+\frac{1}{r^{2}}|\partial_{r}\varphi|^{2} dx\right)^{\frac{1}{2}}.
$$
The off-diagonal anti-self-dual operator $\mathbf{J}$ and diagonal self-dual operator $\mathbf{L}$ are defined by
\begin{equation}\label{e2.7}
\begin{matrix}
 \mathbf{J}=\begin{pmatrix}
 0 & \mathbb{B}\\
 -\mathbb{B'}& 0
 \end{pmatrix}: X^{\ast}\rightarrow X,~~~
 \mathbf{L}=\begin{pmatrix}
 \mathbb{L} & 0\\
  0& A
 \end{pmatrix}: X\rightarrow X^{\ast},
 \end{matrix}
 \end{equation}
in which
 \begin{align}
 &\mathbb{B}\begin{pmatrix}
 \vec{u}\\
 B_{\theta}
 \end{pmatrix}= \begin{pmatrix}
 -2 \omega(r)u_{r}+\varepsilon b(r)\partial_{z}B_{\theta}\\
 \varepsilon rb(r)u_{r}
 \end{pmatrix}: Y^{\ast}\rightarrow X, \nonumber\\
 &\mathbb{B'}\begin{pmatrix}
 f_{1}\\
 f_{2}
 \end{pmatrix}= \begin{pmatrix}
 \mathbb{P}
 \begin{pmatrix}
 -2 \omega(r)f_{1}+ r\varepsilon b(r)f_{2}\\
 0
 \end{pmatrix}\\
 -\varepsilon b(r)\partial_{z}f_{1}
 \end{pmatrix}: X^{\ast}\rightarrow Y, \nonumber\\
 &\mathbb{L}=
 \begin{pmatrix}
 Id_{1} & 0\\
 0 & L
 \end{pmatrix}
  : X\rightarrow X^{\ast},~~~~A=Id_{2}:Y\rightarrow Y^{\ast}, \nonumber
 \end{align}
 with
  \begin{equation*}
  L:= -\frac{1}{r}\partial_{r}(\frac{1}{r}\partial_{r}\cdot)- \frac{1}{r^{2}}\partial_{z}^{2}+ \mathfrak{F}(r):Z\rightarrow Z^{\ast},
  \end{equation*}
  and
  \begin{equation}\label{e2.8}
  \mathfrak{F}(r):= \frac{\partial_{r}\omega^{2}}{\epsilon^{2}b(r)^{2}r}+ (\frac{\partial_{r}^{2}b(r)}{r^{2}b(r)}
 -\frac{\partial_{r}b(r)}{r^{3}b(r)}),
  \end{equation}
where $Id_{1}: L^{2}(\mathbb{R}^{3})\rightarrow (L^{2}(\mathbb{R}^{3})^{\ast} $ and $Id_{2}: Y\rightarrow Y^{\ast}$
 are the isomorphisms, the operator $\mathbb{P}$ is the Leray projection from $L^{2}(\Omega) $ to $L^{2}_{\sigma}( \Omega)$. As proved in \cite[Theorem 2.1]{Lin}, when ($\mathbb{L}, A, \mathbb{B}$) satisfies the conditions (G1)-(G4) of general seperable Hamiltonian PDEs, the unstable spectra of (\ref{e2.6}) are all real discrete and the dimension of the unstable subspace corresponding to positive (negative) eigenvalues can be determined.

 A complete description of the instability spectra and semigroup growth of the linear axi-symmetric MHD equations (\ref{e2.4}) can be stated as follows:
  \begin{theorem}(refer to \cite{Lin})\label{th2.1}
  Assume that the steady state $( v_{0}, H_{0})(x)$ is given by (\ref{e2.1}), in which $ \omega(r)\in C^{3}(R_{1}, R_{2})$, $b(r)\in C^{3}(R_{1}, R_{2})$ with a positive lower bound. \\
  (1) If $R_{1}=0$, $\partial_{r}(\omega^{2})=O(r^{\beta-3})$, $\partial_{r}b =O(r^{\beta-1})$ for some constant $\beta > 0$, as $r\rightarrow 0$.\\
  (2) If $R_{2}=\infty$, $\partial_{r}(\omega^{2}) =O(r^{-3-2\alpha})$, $\partial_{r}b =O(r^{-1-2\alpha})$, for some constant $\alpha > 0$, as $r\rightarrow \infty$. \\
  The operator $\mathbf{JL}$ defined by (\ref{e2.6}) generates a $C^{0}$ group $e^{t\mathbf{JL}}$
  of bounded linear operators on $\mathbf{X}= X\times Y$ and there exists
a decomposition
$$ \mathbf{X}= E^{u}\oplus E^{c}\oplus E^{s}$$
of closed subspace $E^{u,s,c}$ with the following properties: \\
 (i)$E^{c}$, $E^{u}$ and $E^{s}$ are invariant under $e^{t\mathbf{JL}}$. \\
 (ii)$E^{u}(E^{s})$ only consists of eigenvector corresponding to
 positive (negative) eigenvalues of $\mathbf{JL}$ and
 $$ \dim E^{u}= \dim E^{s}=n^{-}(\mathbb{L}|_{\overline{R(\mathbb{B})}}), $$
where the unstable eigenvalues of the linearized operator $\mathbf{J}\mathbf{L}$ are all discrete and the number of unstable mode equals $ 0< n^{-}(\mathbb{L}|_{\overline{R(\mathbb{B})}})< \infty$,
    that is, the number of negative direction of $< \mathbb{L}\cdot, \cdot> $ restricted to $\overline{R(\mathbb{B})}$ which is shown to be
    \begin{equation*}
    \overline{R(\mathbb{B})}= \{ (g_{1}, g_{2})\in X |g_{j}(r,z)
    = \sum_{k=1}^{\infty}e^{ikz}\widetilde{\varphi}_{k,j}(r), j=1,2\}.
    \end{equation*}
   It follows that $n^{-}(\mathbb{L}|_{\overline{R(\mathbb{B})}})=
   \Sigma_{k=1}^{\infty}n^{-}(\mathbb{L}_{k})$, where the operator
   $\mathbb{L}_{k}: H_{mag}^{r}\rightarrow (H_{mag}^{r})^{*}$ is defined by
   \begin{equation}\label{e2.9}
   \mathbb{L}_{k}:= -\frac{1}{r}\partial_{r}(\frac{1}{r}\partial_{r}\cdot)+ \frac{k^{2}}{r^{2}}+ \mathfrak{F}(r)
   \end{equation}
   for any $k\geq 0$, with $\mathfrak{F}(r)$ defined in $(\ref{e2.8})$. $n^{-}(\mathbb{L}_{k})$ denotes the number of negative directions of $<\mathbb{L}_{k}\cdot, \cdot >$. In particular, $n^{-}(\mathbb{L}_{k})=0$ when $k$ is large enough. \\
   (3) If there exists $r_{0}\in (R_{1}, R_{2}) $ such that $ \partial_{r}(\omega^{2}) |_{r=r_{0}}< 0$,
   then for $\epsilon^{2} $ small enough the steady state $(v_{0} , H_{0} )(x) $ in (\ref{e2.1}) is linearly unstable to axi-symmetric perturbations. \\
   (iii) The sharp exponential growth estimate for the linearized MHD equation (\ref{e2.6}) along the most unstable modes
    \begin{equation}
    \|e^{t\mathbf{JL}}(u_{1}, u_{2})(0)\|_{\mathbf{X}}\lesssim e^{\Lambda t} \| (u_{1}, u_{2})(0)\|_{\mathbf{X}}
    \end{equation}
    where $\Lambda> 0$ determined the maximal growth rate.
   \end{theorem}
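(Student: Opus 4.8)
The plan is to regard the Hamiltonian system \eqref{e2.6} with block operators \eqref{e2.7} as a concrete instance of the abstract theory of separable Hamiltonian PDEs, and to extract every assertion of the theorem by verifying the structural hypotheses (G1)--(G4) for the triple $(\mathbb{L}, A, \mathbb{B})$ and then invoking the abstract index theorem and exponential trichotomy from \cite{Lin}. First I would fix the energy space $\mathbf{X} = X \times Y$ and check the elementary duality structure: that $\mathbf{L}$ is self-dual and $\mathbf{J}$ anti-self-dual for the stated pairings, that $A = Id_2$ is bounded, self-dual and coercive on $Y$, and that $\mathbb{B}: Y^{\ast} \to X$ is bounded with the closed-range property required by the framework, which follows from the explicit formulas using that $\omega, b$ are bounded with $b$ bounded below. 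The genuinely delicate piece is $\mathbb{L} = \mathrm{diag}(Id_1, L)$, where $L = -\frac{1}{r}\partial_r(\frac{1}{r}\partial_r \cdot) - \frac{1}{r^2}\partial_z^2 + \mathfrak{F}(r)$: I must produce a spectral splitting $Z = Z_- \oplus \ker L \oplus Z_+$ with $n^{-}(L) < \infty$ and $L$ uniformly positive on $Z_+$.

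This positivity-modulo-finite-dimensions is exactly where the growth/decay hypotheses (1)--(2) enter. Near $r = 0$ the bounds $\partial_r(\omega^2) = O(r^{\beta-3})$, $\partial_r b = O(r^{\beta-1})$ and near $r = \infty$ the bounds $\partial_r(\omega^2) = O(r^{-3-2\alpha})$, $\partial_r b = O(r^{-1-2\alpha})$ guarantee that the potential $\mathfrak{F}(r)$ of \eqref{e2.8} is subordinate to the principal magnetic Dirichlet form of $L$; I would make this quantitative by a quadratic-form comparison in $H^{1}_{mag}$, bounding $\int \mathfrak{F}|\varphi|^2$ by a small multiple of the Dirichlet form plus a lower-order $L^2$ term, so that the essential spectrum of $L$ stays nonnegative and only finitely many negative eigenvalues survive. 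This verifies (G1)--(G4) in the present unbounded setting $R_1 = 0$, $R_2 = \infty$, which is not directly covered by the bounded-cylinder analysis of \cite{Lin}.

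With (G1)--(G4) established, the abstract theorem simultaneously yields that $\mathbf{JL}$ generates a $C^{0}$ group on $\mathbf{X}$, the invariant splitting $\mathbf{X} = E^{u} \oplus E^{c} \oplus E^{s}$ of (i), and the index identity $\dim E^{u} = \dim E^{s} = n^{-}(\mathbb{L}|_{\overline{R(\mathbb{B})}})$ of (ii). Reality and discreteness of the unstable spectrum is the structural dividend of the separable form: because $\mathbf{J}$ is off-diagonal and $\mathbf{L}$ diagonal, the eigenvalue problem reduces on the unstable part to a self-adjoint operator with spectrum parametrized by $\lambda^2$, forcing each unstable $\lambda$ to be real, and the finiteness of $n^{-}$ confines them to a finite set. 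I would then prove the counting formula by the $z$-separation of variables: expanding $\varphi = \sum_{k} e^{ikz}\widetilde{\varphi}_k(r)$ diagonalizes both $\langle \mathbb{L}\cdot,\cdot\rangle$ and the range $\overline{R(\mathbb{B})}$, reducing the form $\langle \mathbb{L}\cdot,\cdot\rangle|_{\overline{R(\mathbb{B})}}$ to the orthogonal sum of the radial forms attached to the operators $\mathbb{L}_k$ of \eqref{e2.9}, whence $n^{-}(\mathbb{L}|_{\overline{R(\mathbb{B})}}) = \sum_{k \geq 0} n^{-}(\mathbb{L}_k)$; finiteness follows because the centrifugal term $k^2/r^2$ eventually dominates $\mathfrak{F}(r)$, so $n^{-}(\mathbb{L}_k) = 0$ for $k$ large.

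For the instability criterion (3) I would exhibit a negative direction of some $\mathbb{L}_k$: the potential $\mathfrak{F}(r)$ contains the term $\partial_r(\omega^2)/(\epsilon^2 b(r)^2 r)$, so the hypothesis $\partial_r(\omega^2)|_{r=r_0} < 0$ makes this contribution strongly negative near $r_0$ once $\epsilon^2$ is small, and a test profile localized at $r_0$ gives $\langle \mathbb{L}_k \varphi, \varphi\rangle < 0$; hence $n^{-}(\mathbb{L}|_{\overline{R(\mathbb{B})}}) \geq 1$, $\dim E^{u} \geq 1$, and linear instability. The exponential bound (iii) is then immediate from the trichotomy, since on the finite-dimensional $E^{u}$ the generator has largest real part $\Lambda > 0$ so that $\|e^{t\mathbf{JL}}|_{E^{u}}\| \lesssim e^{\Lambda t}$, which combined with the estimates on $E^{c} \oplus E^{s}$ yields the stated growth along the most unstable modes. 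I expect the principal obstacle to be precisely the functional-analytic part of verifying (G1)--(G4) on $\mathbb{R}^3$: controlling the singular potential $\mathfrak{F}(r)$ near the axis and at infinity so that $\langle \mathbb{L}\cdot,\cdot\rangle|_{\overline{R(\mathbb{B})}}$ stays a well-defined form of finite negative index and each $\mathbb{L}_k$ retains nonnegative essential spectrum; the exponents $\alpha, \beta > 0$ in (1)--(2) are exactly what make this comparison close.
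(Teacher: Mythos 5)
Your proposal is essentially the paper's own approach: the paper gives no proof of Theorem 2.1 at all --- it is imported verbatim from \cite{Lin} --- and the strategy the paper sketches around that citation (verify conditions (G1)--(G4) for the triple $(\mathbb{L}, A, \mathbb{B})$ in the separable Hamiltonian framework, invoke the abstract index theorem and exponential trichotomy, reduce mode-by-mode in $z$ to the radial operators $\mathbb{L}_k$, and exhibit a small-$\epsilon$ negative direction of some $\mathbb{L}_k$ for the MRI criterion) is exactly what you outline, including the reality of unstable eigenvalues via the reduction to a self-adjoint problem in $\lambda^2$. Two minor corrections: the analysis of \cite{Lin} already covers $0\le R_1<R_2\le\infty$ --- hypotheses (1)--(2) of the theorem are precisely its endpoint decay conditions for $R_1=0$ and $R_2=\infty$ --- so the unbounded radial setting is not the uncovered novelty you present it as (only the $z$-direction is compactified, which is what legitimizes your Fourier series); and the counting formula must sum over $k\ge 1$ only, not $k\ge 0$, since $\overline{R(\mathbb{B})}$ contains no $z$-independent mode, so including $n^{-}(\mathbb{L}_0)$ would overcount $\dim E^u$.
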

    \begin{remark}
    (i) In this paper, we will consider the case $R_2=\infty$ and study the nonlinear instability of (\ref{e1.1}) to construct non-uniqueness Leray weak solution, which
     is located in finite energy class $L^{\infty}(0, T; L^{2}(\mathbb{R}^{3}))\cap L^{2}(0, T; H^{1}(\mathbb{R}^{3}))$. Indeed, here we need to require a stronger
     condition $\omega= O(r^{-1-\alpha})$ for $\alpha >0$ as $r\rightarrow \infty$, which ensure that $\nabla(v_{0}, H_{0})\in L^{2}(\mathbb{R}^{3})$.

     (ii) It follows from \cite{Lin} that any order derivative of $(u_{1}, u_{2})$ can be written in a Hamiltonian form
\begin{equation} \label{e2.11}
\begin{matrix}
 \frac{d}{dt}\begin{pmatrix}
 \partial_{z}^{\alpha}u_{1}\\
 \partial_{z}^{\alpha}u_{2}
 \end{pmatrix}= \mathbf{J}\mathbf{L}\begin{pmatrix}
 \partial_{z}^{\alpha}u_{1}\\
 \partial_{z}^{\alpha}u_{2}
 \end{pmatrix}.
 \end{matrix}
 \end{equation}
 where $ u_{1}= (u_{\theta}+\frac{\partial_{r}\omega(r)}{\varepsilon b(r)}\varphi, \varphi)$, $u_{2}= (\vec{u}, B_{\theta})$ with
 $ \vec{u}=(u_{r}, u_{z})$. Together with the fact $\|(u, B)(t)\|_{L^{2}}
  \sim\|( u_{1},  u_{2})(t)\|_{X}$ and $\|\partial_{z}^{\alpha}(u,B)\|_{L^{2}}
  \sim\|\partial_{z}^{\alpha}(u,B)\|_{X}$. One can get
   \begin{equation}
  \|(u,B)(t)\|_{L^{2}}
  \lesssim e^{\Lambda t}\|(u,B)(0)\|_{L^{2}}
  \end{equation}
  and
  \begin{equation}
  \|\partial_{z}^{\alpha}(u,B)(t)\|_{L^{2}}
  \lesssim e^{\Lambda t}\|\partial_{z}^{\alpha}(u,B)(0)\|_{L^{2}},~~|\alpha|\leq s~(s\geq 0).
  \end{equation}
It also follows \cite{Lin} that $\|\partial_{r}^{\alpha}(u,B)(t)\|_{L^{2}}\lesssim e^{\Lambda t}
  \|(u,B)(0)\|_{H^{s}}$ for $|\alpha|=s$.

    \end{remark}

  \section{\bf The linear instability and semigroup estimates of ideal MHD system }

In this subsection, we are going to show that the linearly unstable properties of axisymmetric case in Theorem 2.1 can be extended to the more general case. To this end, we first linearize the ideal MHD equations (\ref{e1.3}) around the axisymmetric steady solution $(v_{0}, H_{0})(x)$ to obtain the following system (\ref{e3.3}). Then we will obtain the linear instability of (\ref{e3.3}) as in Theorem 3.1.

Assume axisymmetric steady solution $(v_{0}, H_{0})(r,z)$ is perturbed by a small disturbance
  \begin{align}\label{e3.1}
     &v(t,r,z)=v_{0}(r,z)+ \varepsilon u(t,r,z), \quad  H(t,r,z)= H_{0}(r,z) + \varepsilon B(t,r,z), \nonumber \\
     & p(t,r,z) =  p_{0}(r,z)+ \varepsilon\mathcal{P}(t,r,z).
  \end{align}
  We shall rewrite the ideal MHD system (\ref{e1.3}) in following perturbation form:
 \begin{equation}\label{e3.2}
 \left\{\begin{array}{llll}
 \varepsilon\partial_{t}u + \varepsilon v_{0}\cdot \nabla u+ \varepsilon u\cdot \nabla v_{0}- \varepsilon H_{0}\cdot \nabla B- \varepsilon B\cdot \nabla H_{0}+\varepsilon \nabla \mathcal{P}= \varepsilon^{2}B\cdot \nabla B- \varepsilon^{2}u\cdot \nabla u,\\
 \varepsilon\partial_{t}B + \varepsilon v_{0}\cdot \nabla B + \varepsilon u\cdot \nabla H_{0}- \varepsilon H_{0}\cdot \nabla u - \varepsilon B\cdot \nabla v_{0}= \varepsilon^{2}B\cdot \nabla u- \varepsilon^{2}u\cdot \nabla B,\\
  \text{div} \varepsilon u= \text{div} \varepsilon B= 0,
 \end{array}\right.
 \end{equation}
Then we obtain the following linearized system of the MHD system (\ref{e1.3}) around steady solution $(v_{0}, H_{0}, p_{0})(x)$ by estimating (\ref{e3.2}) at order $\varepsilon$
 \begin{equation}\label{e3.3}
 \left\{\begin{array}{llll}
 \partial_{t}u +v_{0}\cdot \nabla u+ u\cdot \nabla v_{0}- H_{0}\cdot \nabla B-B\cdot \nabla H_{0}+\nabla \mathcal{P}= 0,\\
 \partial_{t}B +v_{0}\cdot \nabla B + u\cdot \nabla H_{0}- H_{0}\cdot \nabla u -B\cdot \nabla v_{0}=0,\\
 \text{div} u=\text{div} B= 0.
 \end{array}\right.
 \end{equation}
 Moreover, (\ref{e3.3}) can be rewritten as (\ref{e2.4}) in cylindrical coordinates, which is equivalent to the linearized system (\ref{e2.6}).
 If we define
    \begin{align}\label{e3.4}
     (u, B,\mathcal{P})(x,t)= (u, B,\mathcal{P})(x) e^{\Lambda_{0}t}= (u, B,\mathcal{P})(r,z) e^{\Lambda_{0}t}
    \end{align}
    with
    \begin{align}\label{e3.5}
    & u_{r}= \widetilde{u_{r}}(r)cos(kz) \cdot e^{\Lambda_{0}t},~~
    u_{\theta}= \widetilde{u_{\theta}}(r)cos(kz )\cdot e^{\Lambda_{0} t},~~
    u_{z}= \widetilde{u_{z}}(r)sin(kz)\cdot e^{\Lambda_{0} t},  \nonumber \\
    &\varphi= \widetilde{\varphi}(r)cos(kz) \cdot e^{\Lambda_{0}t},~~
     B_{\theta}= \widetilde{B_{\theta}}(r)sin(kz)\cdot e^{\Lambda_{0} t},~~
     \mathcal{P}= \widetilde{\mathcal{P}}(r)cos(kz) \cdot e^{\Lambda_{0} t},
    \end{align}
     where $\Lambda_{0}$ is one of the unstable eigenvalues of (\ref{e2.6}) and $(u, B,\mathcal{P})(x,t)$ will satisfy equation (\ref{e2.4}). Naturally, the nontrivial (\ref{e3.4}) with $\Lambda_0>0$ is a linear unstable solution of (\ref{e3.3}).
We call the pair $(u, B)$ solve (\ref{e3.3}) in the distribution sense if
 \begin{align}\label{e3.6}
 &\int_{0}^{T}\int_{\mathbb{R}^{3}} u \cdot\partial_{t}\varphi - v_{0}\cdot \nabla u \cdot \varphi- u\cdot \nabla v_{0} \cdot \varphi+ H_{0}\cdot \nabla B \cdot \varphi + B\cdot \nabla H_{0}\cdot \varphi dxdt \nonumber \\
 &= - \int_{\mathbb{R}^{3}} u^0\cdot \varphi (\cdot,0) dx,
 \nonumber \\
 &\int_{0}^{T}\int_{\mathbb{R}^{3}} B \cdot\partial_{t}\phi- v_{0}\cdot \nabla B \cdot \phi- u\cdot \nabla H_{0}\cdot \phi+ H_{0}\cdot \nabla u \cdot \phi +B\cdot \nabla v_{0}\cdot \phi dxdt \nonumber \\
 &= - \int_{\mathbb{R}^{3}} B^0\cdot \phi (\cdot,0) dx
\end{align}
for initial data $u^{0}, B^{0}\in L_{\sigma}^{2}(\mathbb{R}^{3})$ and test function $ \varphi, \phi\in \mathcal{D_{T}}$, where $\mathcal{D_{T}}:= \{\varphi \in C_{0}^{\infty}((0,T); \mathbb{R}^{3}), \text{div}\varphi=0 \}$.

In order to deal with nonlinear term conveniently, we will introduce a trilinear form $\mathfrak{B_{0}}$. First, we define a trilinear form
by setting
\begin{equation}\label{e3.7}
b(u,v,\omega)= \sum_{i,j=1}^{3} \int_{\Omega}u_{i}\partial_{i}v_{j}\omega_{j} dx= \int_{\Omega} u\cdot \nabla v\cdot \omega dx.
\end{equation}
We can easily get by a direct calculation
$$ b(u,v,\omega)= -b(u,\omega,v) .$$
In order to write (\ref{e3.4}) as a simpler form, we define a trilinear form $\mathfrak{B_{0}}$ as
$$ \mathfrak{B_{0}}(\Phi^{1},\Phi^{2}, \Phi^{3})= b(u,v,\omega)-b(\mathbb{U},\mathbb{V},\omega)+ b(u,\mathbb{V},\mathbb{W})-b(\mathbb{U},v,\mathbb{W}),$$
where $\Phi^{1}=(u, \mathbb{U}), \Phi^{2}=(v, \mathbb{V}), \Phi^{3}= (\omega, \mathbb{W})$.
Due to the continuous $b$, one derives that $\mathfrak{B_{0}}$ is trilinearly continuous. This let us give a continuous bilinear operator
$\mathfrak{B}$ as
$$ \langle\mathfrak{B}(\Phi^{1},\Phi^{2}),\Phi^{3} \rangle= \mathfrak{B_{0}}(\Phi^{1},\Phi^{2}, \Phi^{3}).$$

If we choose $\varphi, \phi \in C_{0,\sigma}^{\infty}(\mathbb{R}^{3})$, one can rewrite (\ref{e3.4}) as
\begin{align}
\partial_{t}(u, \varphi)+ b(v_{0},u,\varphi)+ b(u,v_{0},\varphi)- b(H_{0},B,\varphi)-b(B, H_{0},\varphi)=0, \nonumber \\
\partial_{t}(B, \phi)+ b(v_{0},B, \phi)+ b(u,H_{0},\phi)- b(H_{0},u,\phi)-b(B, v_{0},\phi)=0, \nonumber
\end{align}
with $(f,g)= \int_{R^{3}} f\cdot g dx $. Then this system can be equivalent to the following formula
\begin{equation}\label{e3.8}
\partial_{t}(\Gamma, \Psi)+ \mathfrak{B_{0}}(\Gamma_{0},\Gamma, \Psi) + \mathfrak{B_{0}}(\Gamma, \Gamma_{0},\Psi)=0,
\end{equation}
where $\Gamma = (u, B)$, $\Gamma_{0}= (v_{0}, H_{0}) $.
Using the operator $\mathfrak{B}$ previously defined, (\ref{e3.2}) is equivalent to the following formula
\begin{equation}\label{e3.9}
\partial_{t}\Gamma + \mathfrak{B}(\Gamma_{0},\Gamma)+ \mathfrak{B}(\Gamma, \Gamma_{0})=0.
\end{equation}
It is obvious that (\ref{e3.8}) is a weak formulation of the problem (\ref{e3.9}). In the sense of distribution, the system (\ref{e3.3})
can be expressed as (\ref{e3.9}).

Suppose $\Gamma_{0}$ is linearly unstable for the dynamics of
(\ref{e3.3}), there exists an unstable eigenvalue for the linearized operator
 \begin{equation}\label{e3.10}
 -A \Gamma = \mathfrak{B}(\Gamma_{0},\Gamma)+ \mathfrak{B}(\Gamma, \Gamma_{0})
 \end{equation}
 whose domain
 $$\mathcal{D}(A):= \{ \Gamma \in L_{\sigma}^{2}(\mathbb{R}^{3}): \Gamma_{0}\cdot \nabla \Gamma \in L^{2}(\mathbb{R}^{3})\} .$$
 in which $\Gamma = (u, B)$, $\Gamma_{0}= (v_{0}, H_{0}) $.
It is easy to see that $ A$ at least has a unstable eigenvalue $\Lambda_{0}> 0$ from (\ref{e3.3}) and (\ref{e3.4}). The main results can be stated as follows:
  \begin{theorem}\label{th3.1}
  Assume that the steady state $( v_{0}, H_{0})(x)$ is given by (\ref{e2.1}), in which $ \omega(r)\in C^{3}(R_{1}, R_{2})$
  and $b(r)\in C^{3}(R_{1}, R_{2})$ satisfy the conditions
  (1), (2) and (3) in Theorem 2.1, then the linearized operator $A$ defined by (\ref{e3.10}) generates a $C^{0}$ group $e^{At}$ of bounded linear operator on $\mathcal{D}(A)\subset L^{2}_{\sigma}(\mathbb{R}^{3})\rightarrow L^{2}_{\sigma}(\mathbb{R}^{3})$. And there exists a decomposition
  $$ L^{2}(\Omega)= E^{u}\oplus  E^{c}\oplus E^{s}$$
  of closed subspaces $E^{u,s,c}$ with the following properties:

  i) $E^{u}, E^{c}, E^{s}$ are invariant under $e^{At}$.

  ii)The operator $ A$ defined by (\ref{e3.10}) is linear instability and the unstable spectra are all real and discrete. $E^{u}(E^{s})$ only consists of eigenvectors corresponding to positive (negative) eigenvalues of $A$ and dimension is finite.

 (iii)The sharp exponential growth estimate for the $C^{0}$ group $e^{At}$ along the most unstable modes
   $$ \|e^{At} \Gamma_{0}\|_{L^{2}}\lesssim e^{\Lambda t} \| \Gamma_{0}\|_{L^{2}}$$
   where $\Lambda>0$ is the maximum instability eigenvalue.
  \end{theorem}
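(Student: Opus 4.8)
The plan is to prove Theorem \ref{th3.1} not by redoing the spectral analysis, but by transporting every conclusion of Theorem \ref{th2.1} through the explicit change of variables that identifies the linearized system (\ref{e3.3}) with the abstract Hamiltonian system (\ref{e2.6}). The entire argument rests on the fact, already recorded in the excerpt, that (\ref{e3.3}) rewritten in cylindrical coordinates is precisely (\ref{e2.4}), which is in turn equivalent to the form $\frac{d}{dt}(u_{1},u_{2}) = \mathbf{JL}(u_{1},u_{2})$ on $\mathbf{X} = X\times Y$. Thus $A$ and $\mathbf{JL}$ should be conjugate operators, and all of generation, decomposition, spectrum, and growth will follow formally once the conjugacy is made rigorous.

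First I would make the coordinate map precise. Given an axisymmetric divergence-free pair $\Gamma = (u,B)\in L_{\sigma}^{2}(\mathbb{R}^{3})$, I introduce the magnetic potential $\varphi$ from $\text{div}\,B=0$ exactly as in Section 2 and set $u_{1} = (u_{\theta}+\frac{\partial_{r}\omega(r)}{\varepsilon b(r)}\varphi,\ \varphi)$ and $u_{2} = (\vec{u}, B_{\theta})$. I would then verify that the assignment $\Phi\colon \Gamma \mapsto (u_{1},u_{2})$ is a bounded linear bijection from $\mathcal{D}(A)$ onto the domain of $\mathbf{JL}$, with bounded inverse. The decisive analytic input is the norm equivalence $\|(u,B)\|_{L^{2}}\sim\|(u_{1},u_{2})\|_{X}$ recorded in Remark \ref{th2.1}(ii); combined with the elliptic relation that recovers $B$ from $\varphi$ (the system defining $\psi$ in Section 2) this yields continuity of $\Phi$ and $\Phi^{-1}$ and the matching of domains. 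Because $\Phi$ is linear and time-independent, differentiating $\Phi\Gamma(t)$ along a solution of (\ref{e3.9}) and comparing with (\ref{e2.6}) gives the intertwining identity $\Phi A = \mathbf{JL}\,\Phi$ on $\mathcal{D}(A)$, i.e. $A = \Phi^{-1}\mathbf{JL}\,\Phi$.

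With conjugacy established, the three assertions are immediate. Since $e^{t\mathbf{JL}}$ is a $C^{0}$ group on $\mathbf{X}$ by Theorem \ref{th2.1}, the operator $A$ generates the $C^{0}$ group $e^{At}=\Phi^{-1}e^{t\mathbf{JL}}\Phi$ on $L_{\sigma}^{2}(\mathbb{R}^{3})$, proving the generation statement. Pulling the splitting $\mathbf{X}=E^{u}\oplus E^{c}\oplus E^{s}$ back by $\Phi^{-1}$ produces the decomposition $L^{2}(\Omega)=E^{u}\oplus E^{c}\oplus E^{s}$, and invariance under $e^{At}$ follows from invariance under $e^{t\mathbf{JL}}$ because conjugation carries invariant subspaces to invariant subspaces, giving (i). Similar operators share the same spectrum and corresponding eigenvectors, so the unstable spectrum of $A$ is real, discrete and finite, and $E^{u}$ (resp. $E^{s}$) consists of eigenvectors attached to positive (resp. negative) eigenvalues, which is (ii). Finally, applying the norm equivalence to the sharp bound (iii) of Theorem \ref{th2.1} yields $\|e^{At}\Gamma\|_{L^{2}}\sim\|e^{t\mathbf{JL}}\Phi\Gamma\|_{X}\lesssim e^{\Lambda t}\|\Phi\Gamma\|_{X}\sim e^{\Lambda t}\|\Gamma\|_{L^{2}}$, establishing (iii).

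The main obstacle I anticipate lies entirely in the first step: showing that $\Phi$ is a genuine isomorphism of the correct weighted function spaces with bounded inverse, and in particular that the reconstruction of $B$ from the potential $\varphi$ through the elliptic system of Section 2 is bounded on $L_{\sigma}^{2}(\mathbb{R}^{3})$ with the $H_{mag}^{1}$-type control built into the space $Z$. The algebraic intertwining $\Phi A=\mathbf{JL}\,\Phi$ is little more than bookkeeping already carried out in passing from (\ref{e3.3}) to (\ref{e2.4}); by contrast, the equivalence of domains and the two-sided continuity of $\Phi$ — which is what actually transports the semigroup, the spectral decomposition and the growth rate — must be checked against the weights $\frac{1}{r^{2}}$ and the decay conditions (1)--(3) imposed on $\omega$ and $b$. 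Once this functional-analytic equivalence is secured, everything else is a formal transfer of Theorem \ref{th2.1} along the conjugacy.
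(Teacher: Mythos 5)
Your proposal is correct in substance and rests on the same pivot as the paper's own proof: both transport Theorem \ref{th2.1} to the operator $A$ through the observation that (\ref{e3.3}), written in cylindrical coordinates, becomes (\ref{e2.4}) and is hence equivalent to the Hamiltonian system (\ref{e2.6}). The executions differ in a way worth noting. The paper instantiates the transfer concretely: it takes the maximal unstable eigenvalue $a$ of $\mathbf{JL}$, writes down the explicit separated normal mode (\ref{e3.4})--(\ref{e3.5}) with $\Lambda_{0}=a$, observes that this mode also solves (\ref{e3.3}) and is therefore an unstable eigenfunction of $A$, and then obtains the growth estimate by asserting that the spectral bound of $A$ equals the growth bound of $e^{At}$ (a nontrivial point it does not justify). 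You instead package the identification as an operator conjugacy $A=\Phi^{-1}\mathbf{JL}\,\Phi$ and pull back the group, the splitting $E^{u}\oplus E^{c}\oplus E^{s}$, the spectrum, and the semigroup bound all at once; this is cleaner, it sidesteps the spectral-bound-equals-growth-bound issue (your (iii) comes directly from Theorem \ref{th2.1} and the norm equivalence), and it is the only one of the two arguments that explicitly addresses claims (i)--(ii) about the invariant decomposition, which the paper's proof leaves implicit. One caveat applies to both arguments equally: the spaces $X$, $Y$ and the magnetic potential construction are cylindrically symmetric, so $\Phi$ can only be a bijection between the \emph{axisymmetric} part of $\mathcal{D}(A)$ and the domain of $\mathbf{JL}$; since $\mathcal{D}(A)\subset L^{2}_{\sigma}(\mathbb{R}^{3})$ contains non-axisymmetric fields, your claim that $\Phi$ maps all of $\mathcal{D}(A)$ onto $D(\mathbf{JL})$ cannot hold as stated, and both your conjugacy and the paper's normal-mode construction establish the theorem only on the axisymmetric invariant subspace, leaving non-axisymmetric perturbations untreated.
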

  \begin{proof}
  We mainly prove the operator $A$ can inherit the instability nature of the operator $\mathbf{J}\mathbf{L}$. Note that in the cylindrical coordinates, the linearized system (\ref{e3.3}) can be rewritten as (\ref{e2.4}), which is equivalent to the linearized system (\ref{e2.6}) with $b_{r} = \frac{\partial_{z}\varphi}{r}, b_{z} = \frac{\partial_{r}\varphi}{r}$. According to Theorem \ref{th2.1}, the unstable spectra of (\ref{e2.6}) are all discrete and finite, we can denote the maximum instability eigenvalue of the operator $\mathbf{J}\mathbf{L}$ as $a >0$, where $a^{2}= \max\{\sigma (-\mathbb{B'}\mathbb{L}\mathbb{B}A)\}$. We can construct a maximal growing normal mode of the linear equation (\ref{e2.4}) with $\Lambda_{0}=a$ as (\ref{e3.4}).
 It's known that $(u, B, \mathcal{P})$ so defined is also the solution of
   (\ref{e3.3}). It can be proved that the operator $A$ has
   maximum unstable eigenvalue $a > 0$. Then, there
   holds
   \begin{equation}
   \| e^{At}\Gamma_{0}\|_{L^{2}}\lesssim e^{a t} \| \Gamma_{0}\|_{L^{2}}.
   \end{equation}
   where $a =\sup\{\lambda: \lambda\in \sigma(A)\}$ is the spectral bound of $ e^{At}$ equaling to growth bound $\omega_{0}(e^{At}):= \inf\{\omega\in \mathbb{R}:\|e^{At}\|_{L^{2}\rightarrow L^{2}}\leq M(\omega)e^{\omega t}\}$.

  \end{proof}

\section{Linear instability: from the ideal MHD equation to MHD equation}
   %We define $(V_{0}(x), W_{0}(x))$ to be the axisymmetric velocity profile and magnetic profile
  %associated with $(v_{0}(x), H_{0}(x))$.
  %Under the similarity variables (\ref{e1.8}),
  In this section, we concern the linear instability of the following Leray equation around steady state $(\beta V_{0}(\xi), \beta W_{0}(\xi))$ ($\beta\gg 1$)
\begin{equation}\label{e4.1}
\left\{\begin{array}{ll}
\partial_{\tau}U-\frac{1}{2}(1+\xi\cdot \nabla_{\xi}) U+ \beta \mathbb{P}(V_{0}\cdot\nabla U+ U\cdot\nabla V_{0}- W_{0}\cdot\nabla W-W\cdot\nabla W_{0})-\Delta_{\xi} U = 0, \\
\partial_{\tau}W -\frac{1}{2}(1+\xi\cdot \nabla_{\xi}) W+ \beta \mathbb{P}(V_{0}\cdot\nabla W + U\cdot\nabla W_{0} -W_{0}\cdot\nabla U-W\cdot\nabla V_{0})-\Delta_{\xi} W= 0,
\end{array}\right.
\end{equation}
 in which $(V_{0}(\xi), W_{0}(\xi))$ is the axisymmetric velocity profile and magnetic profile
  associated with $(v_{0}(x), H_{0}(x))$. That is to say, they are equal under the similarity variables (\ref{e1.8}) as $t=1$. Using the same method introduced in Section 3, we can rewrite (\ref{e4.1}) as
 \begin{equation}\label{e4.2}
\partial_{\tau}\Xi - \frac{1}{2}(1+\xi\cdot \nabla_{\xi})\Xi- \Delta_{\xi}\Xi + \beta\mathbb{P}(\mathfrak{B}(\Xi_{0} , \Xi)+ \mathfrak{B}( \Xi,\Xi_{0} ))=0,
\end{equation}
  where $\Xi=( V(\xi, \tau), W(\xi, \tau))$ and $\Xi_{0} =(V_{0}(\xi), W_{0}(\xi)).$
  In this section, we concern ourselves with instability for the operator
 \begin{equation}\label{e4.3}
   -L_{ss}^{\beta}\Xi = - \frac{1}{2}(1+\xi\cdot \nabla_{\xi})\Xi - \Delta_{\xi}\Xi  + \beta\mathbb{P}(\mathfrak{B}(\Xi_{0} , \Xi)+ \mathfrak{B}( \Xi, \Xi_{0} ))
   \end{equation}
   whose domain is
   $$\mathcal{D}(L_{ss}^{\beta}):= \{ \Xi \in L_{\sigma}^{2}( \Omega): \Xi\in H^{2}( \Omega), \xi\cdot \nabla_{\xi}\Xi \in L^{2}(\Omega\} .$$
Indeed, we $claim$ that $ \beta\Xi_{0} $ is linearly unstable for the dynamics of (\ref{e4.3}), namely, there exists an unstable
 eigenvalue $\widetilde{\lambda_{\beta}}> 0$ for the linearized operator $ L_{ss}^{\beta}$.

It is difficult to study the unstable eigenvalue for the linearized operator $ L_{ss}^{\beta}$ directly. Thus, multiplying both sides of (\ref{e4.3}) by $\frac{1}{\beta}$, we can obtain that
\begin{equation*}
 - T_{\beta}\Xi:= - \frac{1}{\beta}[(\frac{1}{2}+\frac{1}{2}\xi\cdot \nabla_{\xi}) +\Delta_{\xi}]\Xi + \mathbb{P}(\mathfrak{B}(\Xi_{0} , \Xi)+ \mathfrak{B}( \Xi, \Xi_{0} ))
\end{equation*}
The main terms in the operator $- T_{\beta}$ are arising from the nonlinearity of the equation; the extra term, including the Laplacian, can be considered as perturbations. In the following, we will prove the existence of the unstable eigenvalue $\lambda_{\beta}$ of $T_{\beta}$. Then $\widetilde{\lambda_{\beta}}= \beta \lambda_{\beta}$ would be taken as an unstable eigenvalue of the linearized operator $ L_{ss}^{\beta}$.

  Let us firstly define
 \begin{equation}\label{e4.4}
 - T_{\infty}\Xi:= \mathfrak{B}(\Xi_{0} , \Xi)+ \mathfrak{B}( \Xi, \Xi_{0} ).
 \end{equation}
It follows from Theorem \ref{th3.1} that the operator $T_{\infty}$ is linearly unstable and exist unstable eigenvalue. Now let us state our main result in this section as follows.
\begin{theorem}\label{th4.1}
{\bf (the instability  of self-similar MHD) }Let $\Lambda_{0}$ be an unstable eigenvalue of $T_{\infty}$ with $\Lambda_{0} > 0$. For any
$\varepsilon > 0$, there exists $\beta_{0} >0$ such that, for all $\beta > \beta_{0} $, $T_{\beta}|_{\mathbf{X}_{sub}}$ has an unstable eigenvalue $\lambda_{\beta}> 0$
satisfying $|\lambda_{\beta}- \Lambda_{0}|< \varepsilon$. We can conclude that $L_{ss}^{\beta}$ has unstable eigenvalue $\widetilde{\lambda_{\beta}}$ with $\widetilde{\lambda_{\beta}}=\beta \lambda_{\beta}$ and the corresponding unstable modes belongs to $L^{2}(\mathbb{R}^{3})$.
\end{theorem}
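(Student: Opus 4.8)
The plan is to recast the statement as a spectral perturbation problem and to invoke Kato's theory of isolated eigenvalues \cite{Kato}. Reading off the definitions of $T_\beta$ and $T_\infty$, one has the decomposition
\begin{equation*}
T_\beta = T_\infty + \frac{1}{\beta}\,S, \qquad S := \Delta_\xi + \tfrac{1}{2}\bigl(1 + \xi\cdot\nabla_\xi\bigr),
\end{equation*}
so that $T_\beta \to T_\infty$ as $\beta \to \infty$ with a correction of size $O(1/\beta)$. The essential subtlety is that this is a \emph{singular} perturbation: $S$ carries the second-order Laplacian, whereas $T_\infty$ is only a first-order advection-type operator, so $\mathcal{D}(T_\infty)$ strictly contains $\mathcal{D}(T_\beta) = H^2 \cap \mathcal{D}(\xi\cdot\nabla_\xi)$. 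Consequently $\frac{1}{\beta}S$ is not a bounded perturbation and one cannot quote bounded perturbation theory directly; the Riesz-projection route below is the robust substitute.

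First I would record the spectral data supplied by Theorem \ref{th3.1}: on $\mathbf{X}_{sub}$ the unstable spectrum of $T_\infty$ is discrete and finite, so $\Lambda_0 > 0$ is an isolated eigenvalue of finite algebraic multiplicity and is separated from the rest of $\sigma(T_\infty)$ by a spectral gap. I then fix a circle $\gamma = \{\, |z-\Lambda_0| = \delta \,\}$ with $0 < \delta < \min\{\varepsilon,\Lambda_0\}$ so that $\gamma$ together with its interior meets $\sigma(T_\infty)$ only at $\Lambda_0$; along $\gamma$ the resolvent $(z-T_\infty)^{-1}$ is uniformly bounded.

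The core step is norm-resolvent convergence along $\gamma$. The decisive estimate is the relative boundedness of $S$ with respect to $T_\infty$ on $\mathbf{X}_{sub}$, i.e. that the operator $K(z) := S\,(z-T_\infty)^{-1}$ is bounded uniformly for $z\in\gamma$. Granting this, for $\beta$ large one has $\|\tfrac{1}{\beta}K(z)\|<1$, and the factorization
\begin{equation*}
z - T_\beta = \Bigl[\,I - \tfrac{1}{\beta}K(z)\,\Bigr]\,(z - T_\infty)
\end{equation*}
shows via a Neumann series that $\gamma\subset\rho(T_\beta)$ with $(z-T_\beta)^{-1}$ uniformly bounded on $\gamma$. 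The second resolvent identity
\begin{equation*}
(z - T_\beta)^{-1} - (z - T_\infty)^{-1} = \frac{1}{\beta}\,(z-T_\beta)^{-1}\,S\,(z-T_\infty)^{-1}
\end{equation*}
then yields $\|(z-T_\beta)^{-1}-(z-T_\infty)^{-1}\| = O(1/\beta)$ uniformly on $\gamma$. Defining the Riesz projections
\begin{equation*}
P_\beta = \frac{1}{2\pi i}\oint_\gamma (z-T_\beta)^{-1}\,dz, \qquad P_\infty = \frac{1}{2\pi i}\oint_\gamma (z-T_\infty)^{-1}\,dz,
\end{equation*}
I obtain $\|P_\beta - P_\infty\|\to 0$, so $\operatorname{rank}P_\beta = \operatorname{rank}P_\infty$ for $\beta$ large (projections close in norm have equal rank). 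Hence $T_\beta|_{\mathbf{X}_{sub}}$ has spectrum inside $\gamma$, producing an eigenvalue $\lambda_\beta$ with $|\lambda_\beta-\Lambda_0|<\delta\le\varepsilon$; since $\delta<\Lambda_0$ this forces $\lambda_\beta>0$. Finally $\widetilde{\lambda_\beta}=\beta\lambda_\beta$ is an eigenvalue of $L_{ss}^{\beta}=\beta T_\beta$, and because the eigenvalue equation for $T_\beta$ is elliptic (it contains $\frac{1}{\beta}\Delta_\xi$), elliptic regularity places the eigenfunction in $H^2\cap L^2_\sigma(\mathbb{R}^3)$, giving the claimed $L^2$ membership of the unstable modes.

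The main obstacle is exactly the relative boundedness of the second-order operator $S$ against the first-order operator $T_\infty$, which is the reason for restricting to $\mathbf{X}_{sub}$. On this subspace the problem collapses onto the finitely many unstable azimuthal wavenumbers $k$ with $n^{-}(\mathbb{L}_k)>0$, where the eigenvalue problem becomes an ODE system in the radial variable $r$ and $(z-T_\infty)^{-1}$ gains the regularity needed to absorb $S$. Verifying that $(z-T_\infty)^{-1}$ maps $\mathbf{X}_{sub}$ into $\mathcal{D}(S)$ with a bound uniform on $\gamma$ requires the smoothness and decay of the background profile $\Xi_0=(V_0,W_0)$ together with the explicit structure of the unstable eigenfunctions; this estimate is the technical heart of the argument, after which the Kato machinery runs routinely.
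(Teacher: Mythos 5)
Your overall framework (Kato perturbation theory for the family $T_\beta = T_\infty + \frac{1}{\beta}S$) matches the paper's starting point, but the machinery you choose is genuinely different, and the difference is exactly where your argument breaks. The paper never attempts resolvent estimates: it works from the outset on the finite-dimensional space $\mathbf{X}_{sub} = \mathcal{D}\cap E^{u}$, proves the elementary Lipschitz bound $\|T_{\tau_1}\Xi - T_{\tau_2}\Xi\|_{L^2}\le |\tau_1-\tau_2|\,\|\Xi\|_{\mathcal{D}}$ (continuity of $\tau\mapsto T_\tau$ at $\tau=0$, with $\tau=1/\beta$), and then invokes Kato's finite-dimensional lemma that eigenvalues of a continuous family of operators on a finite-dimensional space are continuous; this is why the theorem's conclusion is phrased for the compression $T_\beta|_{\mathbf{X}_{sub}}$ rather than for $T_\beta$ itself. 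Your route via Riesz projections and norm-resolvent convergence would prove a stronger statement (an eigenvalue of the full operator $T_\beta$), but it stands or falls with the bound you yourself single out: that $K(z) = S(z-T_\infty)^{-1}$ is uniformly bounded on the contour $\gamma$. That bound is not a removable technicality; it is false on $L^2_\sigma(\mathbb{R}^3)$. The operator $T_\infty$ is first order (transport by $\Xi_0$ plus bounded zeroth-order terms), so its resolvent gains at most one derivative, while $S$ contains the full Laplacian $\Delta_\xi$; consequently $S(z-T_\infty)^{-1}$ is unbounded and the Neumann series for $[I-\frac{1}{\beta}K(z)]^{-1}$ cannot be summed, no matter how large $\beta$ is. This is precisely the singular-perturbation obstruction you flag in your opening paragraph, and it does not go away.

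Your proposed repair --- restricting to $\mathbf{X}_{sub}$ --- does not rescue the argument, because $\mathbf{X}_{sub}$ is invariant under $T_\infty$ but not under $S$ (the Laplacian and the scaling term do not map eigenfunctions of $T_\infty$ back into $E^{u}$), hence not under $T_\beta$ or its resolvent either. All the identities your proof relies on --- the factorization $z-T_\beta = [I-\frac{1}{\beta}K(z)](z-T_\infty)$, the second resolvent identity, and the contour integrals defining $P_\beta$ --- are identities between operators on the full space; they cannot be restricted to a subspace the operators do not preserve. On $\mathbf{X}_{sub}$ itself, $K(z)$ is indeed well defined and trivially bounded (its domain is finite-dimensional, spanned by smooth decaying eigenfunctions), but $K(z)$ maps $\mathbf{X}_{sub}$ out of $\mathbf{X}_{sub}$, so even $K(z)^2$ is undefined there and no Neumann series is available. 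To make your approach work one would need to establish convergence of $(z-T_\beta)^{-1}$ to $(z-T_\infty)^{-1}$ in a topology strong enough to compare Riesz projections, which for this vanishing-viscosity-type limit is a substantial analytic problem in its own right, not a routine application of Kato. The paper's finite-dimensional compression (whatever its own limitations concerning the non-invariance of $\mathbf{X}_{sub}$ under $T_\beta$) is exactly the device that avoids having to prove any such estimate.
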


Before showing the proof of Theorem \ref{th4.1}, we first give a spectral perturbation Lemma due to Kato \cite{Kato}.
 \begin{lemma}\label{lem4.2}
  Consider the operator $T$ in the finite dimensional space, assume $T(\tau)$ be continuous at $\tau =0$, then the eigenvalues of $T(\tau)$ are continuous at $\tau =0$.
 \end{lemma}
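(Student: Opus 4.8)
The plan is to reduce the statement to the classical fact that the roots of a monic polynomial depend continuously on its coefficients, and then to verify that fact by means of Rouch\'e's theorem (equivalently, the argument principle). Since the ambient space is finite dimensional, no functional-analytic machinery is required beyond this.

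First I would fix a basis of the finite-dimensional space, so that $T(\tau)$ is represented by an $n\times n$ matrix whose entries $T_{ij}(\tau)$ are, by hypothesis, continuous at $\tau=0$. The eigenvalues of $T(\tau)$ are precisely the roots of the characteristic polynomial
\[
p_{\tau}(\lambda)=\det(\lambda I - T(\tau))=\lambda^{n}+c_{n-1}(\tau)\lambda^{n-1}+\cdots+c_{0}(\tau).
\]
Each coefficient $c_{k}(\tau)$ is, up to sign, an elementary symmetric expression in the entries $T_{ij}(\tau)$, hence a polynomial in those entries and therefore itself continuous at $\tau=0$. In this way the problem is transferred to showing that the roots of $p_{\tau}$ move continuously with the coefficients $c_{k}(\tau)$ as $\tau\to 0$.

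Next I would carry out the root-continuity argument. Let $\lambda_{1}^{0},\dots,\lambda_{m}^{0}$ be the distinct roots of $p_{0}$ with multiplicities $\mu_{1},\dots,\mu_{m}$. Fix $\varepsilon>0$ smaller than half the minimal distance between distinct roots, and consider the disjoint circles $\gamma_{i}=\{|\lambda-\lambda_{i}^{0}|=\varepsilon\}$. Since $p_{0}$ has no zero on the compact set $\bigcup_{i}\gamma_{i}$, there is a constant $c>0$ with $|p_{0}(\lambda)|\geq c$ there. By the continuity of the coefficients $c_{k}(\tau)$ at $\tau=0$ and the compactness of $\bigcup_{i}\gamma_{i}$, there is $\delta>0$ such that $|p_{\tau}(\lambda)-p_{0}(\lambda)|<c\leq|p_{0}(\lambda)|$ on $\bigcup_{i}\gamma_{i}$ whenever $|\tau|<\delta$. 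Rouch\'e's theorem then gives that $p_{\tau}$ and $p_{0}$ have the same number of zeros (counted with multiplicity) inside each $\gamma_{i}$, namely $\mu_{i}$. Consequently every eigenvalue of $T(\tau)$ lies within $\varepsilon$ of some eigenvalue of $T(0)$, and each eigenvalue $\lambda_{i}^{0}$ is approached by exactly $\mu_{i}$ eigenvalues of $T(\tau)$, which is precisely the asserted continuity of the eigenvalues at $\tau=0$.

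The main obstacle is one of correct formulation rather than of technique: because the eigenvalues form an unordered multiset whose multiplicity structure may change under perturbation (a repeated eigenvalue can split into several nearby ones), \emph{continuity} must be understood as continuity of the spectrum as a set, for instance in the Hausdorff metric, rather than as continuity of individually labelled eigenvalue functions. The Rouch\'e argument above is exactly what renders this notion precise and verifies it, showing simultaneously that no eigenvalue escapes to a distance $\geq\varepsilon$ and that the total multiplicity clustering near each limiting eigenvalue is preserved; this is the content we will invoke in the proof of Theorem \ref{th4.1} when tracking the unstable eigenvalue of $T_{\beta}$ as $\beta\to\infty$.
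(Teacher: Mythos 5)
Your proof is correct: the paper itself gives no argument for this lemma, citing only Kato's book, and your reduction to continuity of roots of the characteristic polynomial followed by the Rouch\'e/argument-principle estimate on small circles around the distinct eigenvalues of $T(0)$ is precisely the classical proof behind that citation. You also correctly identify the one subtle point --- that ``continuity of the eigenvalues'' must be read as continuity of the spectrum as a multiset (each eigenvalue of $T(0)$ of multiplicity $\mu_i$ attracts exactly $\mu_i$ eigenvalues of $T(\tau)$, none escaping) --- which is exactly the form in which the lemma is invoked in the proof of Theorem \ref{th4.1}.
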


 Now let us prove Theorem \ref{th4.1}.

\begin{proof}
We define the following operator
$$
- T_{\tau}\Xi:= - \tau[\frac{1}{2}+\frac{1}{2}\xi\cdot \nabla_{\xi} +\Delta_{\xi}]\Xi + \mathbb{P}(\mathfrak{B}(\Xi_{0} , \Xi)+ \mathfrak{B}( \Xi, \Xi_{0} ))
$$
with domain
$$\mathcal{D}:= \{ \Xi \in L_{\sigma}^{2}(\mathbb{R}^{3}): \Xi\in H^{2}, \xi\cdot \nabla_{\xi}\Xi \in L^{2}(\mathbb{R}^{3})\} .$$
For any $\tau_{1}, \tau_{2} \in \mathbb{R}$, $ \Xi\in  \mathcal{D}$, it's not difficult to check that
 \begin{equation}\label{e4.6}
 \|T_{\tau_{1}}\Xi - T_{\tau_{2}}\Xi\|_{L^{2}}\leq |\tau_{1}-\tau_{2} |\|\Xi\|_{\mathcal{D}},
 \end{equation}
which gives the continuity of $ T(\tau)$ with respect to $\tau$.

 For our perturbation argument, we will consider a new finite dimensional subspace
 $$\mathbf{X}_{sub}=\mathcal{D}\cap E^{u}$$
by taking $\tau=\frac{1}{\beta}$, where $E^{u}$ defined in Theorem \ref{th3.1} only consists of eigenvectors corresponding to positive eigenvalues of $T_{\infty}$.
 Since $\Lambda_{0}$ is an unstable eigenvalue of $T_{\infty}$ with $\Lambda_{0} > 0$, then we use Kato's perturbation Lemma \ref{lem4.2} on $\mathbf{X}_{sub}$ to deduce that there is an unstable eigenvalue $\lambda_{\beta}$ of $T_{\beta}$. So $\widetilde{\lambda_{\beta}}= \beta \lambda_{\beta}$ would be an unstable eigenvalue of the linearized operator $ L_{ss}^{\beta}$.
 \end{proof}

\section{Nonlinear instability}

In this section, we demonstrate how to use the linear instability established in Theorem {\ref{th4.1}} to construct non-unique Leray weak solution to the forced MHD equations.
The refined non-uniqueness Theorem can be stated as follows:
\begin{theorem}\label{thm5.1}
There exists a smooth decaying unstable velocity and magnetic profile $\beta\Xi_{0} =( \beta V_{0}(\xi), \beta W_{0}(\xi))$ of (\ref{e1.9}) with force profile
 \begin{equation*}
 F(\xi, \tau)= - \frac{\beta}{2}(1+\xi\cdot \nabla_{\xi})\Xi_{0} - \beta\Delta_{\xi}\Xi_{0} +  \beta^{2}\mathbb{P}(\mathfrak{B}(\Xi_{0} , \Xi_{0})+ \mathfrak{B}( \Xi_{0} , \Xi_{0} ))
 \end{equation*}
 satisfying the following properties:\\
 (A) The linearized operator $L_{ss}^{\beta}$ defined in (\ref{e4.3})~($\beta$ sufficiently large) has real discrete unstable eigenvalue $\lambda_{\beta}$. Then $ a$ ($a> 0)$ can be chosen to be the maximally unstable, the corresponding non-trivial smooth
 eigenfunction $\eta$ belonging to $H^{k}(\mathbb{R}^{3})$ for all $k \geq 0$:
\begin{equation} \label{e5.1}
L_{ss}^{\beta}\eta= a \eta.
\end{equation}
We can look for another solution $\Xi$ to (\ref{e1.9}) that vanishes as $\tau\rightarrow -\infty$ with the ansatz $$\Xi= \beta\Xi_{0}+ \Xi^{lim}+ \Xi^{per},$$
where
 \begin{equation}\label{e5.2}
 \Xi^{lim}= e^{a \tau }\eta
 \end{equation}
 solves the linear equation
\begin{equation}\label{e5.3}
\partial_{\tau}\Xi^{lim} = L_{ss}^{\beta}\Xi^{lim} .
\end{equation}
(B) Substituting this ansatz into (\ref{e1.9}), there exists $T\in \mathbb{R}$ and a velocity field and magnetic profile $\Xi^{per}$ satisfying (\ref{e5.25}) and
 \begin{equation}\label{e5.4}
 \|\Xi^{per}\|_{H^{k}} \leq C e^{2a \tau}~~~~~\forall ~~\tau\in (-\infty , T)
 \end{equation}
for all $ k\geq 0$.
Correspondingly, in the similarity variable (\ref{e1.8}), we construct the first Leray weak solution of the equation (\ref{e1.1}) is
$$(v_{1}, H_{1})(x,t)=(\frac{\beta }{\sqrt{t}}V_{0}(\xi), \frac{\beta }{\sqrt{t}}W_{0}(\xi))$$
with force $f_{i}(x,t)= \frac{1}{t^{\frac{3}{2}}}F_{i}(\xi, \tau)$ for $(i=1,2)$ on a time interval $[0, e^{T}]$.
Based on this, the second weak solution of the MHD equations (\ref{e1.1}) constructed is
$$(v_{2}(x,t), H_{2}(x,t))= \frac{1}{\sqrt{t}}\Xi(\xi, \tau)$$
on $\mathbb{R}^{3}\times (0, e^{T}) $ with zero initial data and same forcing term $f_{i}(x,t)$ for $(i=1,2)$.
\end{theorem}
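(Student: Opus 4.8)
The plan is to organize the proof along the two parts (A) and (B) of the statement. For part (A), the unstable eigenvalue $a>0$ of $L_{ss}^{\beta}$ (for $\beta$ sufficiently large) is already supplied by Theorem~\ref{th4.1}; I take $a=\widetilde{\lambda_{\beta}}$ to be the maximal unstable eigenvalue. What remains is the regularity of the eigenfunction. The eigenrelation $L_{ss}^{\beta}\eta=a\eta$ is elliptic because $L_{ss}^{\beta}$ contains $\Delta_{\xi}$, and its remaining coefficients, namely the drift $\tfrac12\xi\cdot\nabla_{\xi}$ and the advection and stretching by $\beta\Xi_{0}$, are smooth with controlled decay since $\Xi_{0}$ is smooth and decaying. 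I would rewrite the eigenrelation as $(-\Delta_{\xi}+1)\eta=g(\eta)$ with $g$ collecting the lower-order terms and bootstrap: each application of elliptic regularity upgrades $\eta\in H^{m}$ to $\eta\in H^{m+1}$, using the decay of $\Xi_{0}$ to control $\xi\cdot\nabla_{\xi}\eta$, so that $\eta\in H^{k}$ for every $k$. Setting $\Xi^{lim}=e^{a\tau}\eta$, equation~(\ref{e5.3}) then holds by direct differentiation.

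For part (B), I substitute the ansatz $\Xi=\beta\Xi_{0}+\Xi^{lim}+\Xi^{per}$ into~(\ref{e1.9}), subtract the steady equation satisfied by $\beta\Xi_{0}$ (which produces exactly the stated force $F$) together with the linear equation~(\ref{e5.3}) for $\Xi^{lim}$, and arrive at the perturbation equation
\begin{equation}\label{e5.25}
\partial_{\tau}\Xi^{per}=L_{ss}^{\beta}\Xi^{per}-\mathbb{P}\big(\mathfrak{B}(\Xi^{lim}+\Xi^{per},\Xi^{lim}+\Xi^{per})\big).
\end{equation}
Expanding the quadratic term, the leading inhomogeneity is $\mathbb{P}\mathfrak{B}(\Xi^{lim},\Xi^{lim})=e^{2a\tau}\mathbb{P}\mathfrak{B}(\eta,\eta)$, of size $O(e^{2a\tau})$, while the cross terms and $\mathfrak{B}(\Xi^{per},\Xi^{per})$ are of strictly higher order in $e^{a\tau}$. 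I recast~(\ref{e5.25}) as the mild equation integrated from $\tau=-\infty$,
\begin{equation*}
\Xi^{per}(\tau)=-\int_{-\infty}^{\tau}e^{(\tau-s)L_{ss}^{\beta}}\,\mathbb{P}\,\mathfrak{B}\big(\Xi^{lim}+\Xi^{per},\Xi^{lim}+\Xi^{per}\big)(s)\,ds,
\end{equation*}
which automatically enforces $\Xi^{per}\to0$ as $\tau\to-\infty$.

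The core of the argument is a contraction for this integral operator in the weighted space $\mathcal{B}_{T}=\{\Xi^{per}:\sup_{\tau<T}e^{-2a\tau}\|\Xi^{per}(\tau)\|_{H^{k}}\le C\}$. The two ingredients I would establish first are: (i) a semigroup bound $\|e^{\sigma L_{ss}^{\beta}}\|_{H^{k}\to H^{k}}\lesssim e^{\mu\sigma}$ for $\sigma\ge0$ with some growth exponent $\mu\in(a,2a)$, which holds because the maximal unstable eigenvalue is $a$ and spectral bound equals growth bound for this semigroup; and (ii) a bilinear estimate for $\mathbb{P}\mathfrak{B}$. Since $\mathfrak{B}$ carries one derivative, the naive $H^{k}$ estimate loses a derivative, which I would recover through the parabolic smoothing of the analytic semigroup generated by the principal part $\Delta_{\xi}$, in the form $\|e^{\sigma L_{ss}^{\beta}}\mathbb{P}\,\mathfrak{B}(\Phi,\Psi)\|_{H^{k}}\lesssim e^{\mu\sigma}\sigma^{-1/2}\|\Phi\|_{H^{k}}\|\Psi\|_{H^{k}}$ for small $\sigma$, the singularity $\sigma^{-1/2}$ being integrable. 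Inserting the target bound $\|(\Xi^{lim}+\Xi^{per})(s)\|_{H^{k}}\lesssim e^{as}$ into Duhamel and using $\int_{-\infty}^{\tau}e^{\mu(\tau-s)}(\tau-s)^{-1/2}e^{2as}\,ds\lesssim e^{2a\tau}$, which converges precisely because $\mu<2a$, reproduces~(\ref{e5.4}); the same computation with the smallness gained by taking $T$ sufficiently negative (so $e^{2aT}\ll1$) shows the map contracts on $\mathcal{B}_{T}$, giving a fixed point $\Xi^{per}$ obeying~(\ref{e5.4}) for all $k\ge0$. The main obstacle is exactly closing this estimate against the derivative loss in $\mathfrak{B}$ while controlling the non-self-adjoint and spatially unbounded operator $L_{ss}^{\beta}$: one must know that the growth bound of $e^{\sigma L_{ss}^{\beta}}$ does not exceed $a$ and that the smoothing estimate survives the drift $\tfrac12\xi\cdot\nabla_{\xi}$, which is where the spectral splitting into the finite-dimensional unstable eigenspace (Theorem~\ref{th4.1}) and its complement is essential, the former handled as an ODE and the latter by analytic-semigroup smoothing.

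Finally, I would return to physical variables via $(v_{2},H_{2})(x,t)=t^{-1/2}\Xi(\xi,\tau)$ with $\xi=x/\sqrt{t}$ and $\tau=\log t$. The decay~(\ref{e5.4}) as $\tau\to-\infty$ translates into $\|(v_{2},H_{2})(\cdot,t)\|_{L^{2}}\to0$ as $t\to0^{+}$, furnishing zero initial data and rendering $(v_{2},H_{2})$ distinct from $(v_{1},H_{1})=t^{-1/2}\beta\Xi_{0}(\xi)$, while both share the common forcing $f_{i}=t^{-3/2}F_{i}(\xi)$. The self-similar scaling together with the smoothness and decay of $\Xi_{0},\eta$ and $\Xi^{per}$ guarantees $f_{i}\in L_{t}^{1}L_{x}^{2}$ and membership in the finite-energy class $L^{\infty}(0,T;L^{2}_{\sigma})\cap L^{2}(0,T;W^{1,2})$ of Definition~\ref{def1} on $[0,e^{T}]$; a routine verification of the weak formulation~(\ref{e1.6}) and of the energy inequality~(\ref{e1.7}) then completes the proof and hence establishes Theorem~\ref{th1}.
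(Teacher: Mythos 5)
Your proposal is correct and, in its architecture, is essentially the paper's own proof: part (A) is imported from Theorem \ref{th4.1}, the perturbation equation you derive is the paper's (\ref{e5.22}), your mild formulation integrated from $\tau=-\infty$ is the paper's Duhamel map (\ref{e5.24}), and your two ingredients (i) and (ii) are exactly the paper's Lemma \ref{lem 5.2} (spectral bound equals growth bound) and Lemma \ref{lem 5.5} (parabolic smoothing with singularity $\tau^{-1/2}$ and exponent $a+\delta$). The one genuine difference is the weighted space used for the contraction: the paper (Proposition \ref{pro5.6}) contracts in the norm $\sup_{\tau<T}e^{-(a+\varepsilon_0)\tau}\|\cdot\|_{H^{N}}$, so its fixed point a priori decays only like $e^{(a+\varepsilon_0)\tau}$, and the rate $e^{2a\tau}$ claimed in (\ref{e5.4}) is recovered afterwards by re-inserting the fixed point into the Duhamel formula and bootstrapping; you instead contract directly in the $e^{2a\tau}$-weighted ball. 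Your variant is legitimate: the inhomogeneous term $\int_{-\infty}^{\tau}e^{(\tau-s)L_{ss}^{\beta}}\mathbb{P}\mathfrak{B}(\Xi^{lim},\Xi^{lim})\,ds$ is genuinely $O(e^{2a\tau})$ because the semigroup exponent $a+\delta$ may be taken strictly below $2a$, while the linear and quadratic parts of the map carry factors $e^{aT}$ and $e^{2aT}$ that vanish as $T\to-\infty$; so you obtain (\ref{e5.4}) in one pass and dispense with the paper's bootstrap step, at the cost of slightly tighter exponent bookkeeping. Two points you treat more lightly than they deserve, though neither is a fatal gap and the paper is no more detailed: the $H^{k}$-regularity of the eigenfunction $\eta$, where your elliptic bootstrap must cope with the unbounded drift $\xi\cdot\nabla_{\xi}$ (the naive $H^{m}\Rightarrow H^{m+1}$ iteration needs weighted estimates to control $\xi\cdot\nabla_{\xi}\eta$ at each stage), and the assertion that the growth bound of $e^{\tau L_{ss}^{\beta}}$ is governed by $a$, which is not automatic for a non-self-adjoint generator of this type and is precisely the content of Lemma \ref{lem 5.2}; you do correctly flag the finite-dimensional spectral splitting as the mechanism behind it, which is more than the paper itself spells out.
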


 The solutions constructed above live at critical regularity. One may
 easily verify that for any $p\in [2, +\infty]$, $j,k\geq 0$ and $t\in (0, e^{T})$, we have
 \begin{align}
 t^{\frac{k}{2}}\|\nabla^{k}\Gamma _{0}(\cdot,t)\|_{L^{p}}+t^{\frac{k}{2}}\|\nabla^{k}\Gamma(\cdot,t)\|_{L^{p}}\lesssim
 t^{\frac{1}{2}(\frac{3}{p}-1)}, \nonumber\\
 t^{j+\frac{k}{2}}\|\partial_{t}^{j}\nabla^{k}f(\cdot,t)\|_{L^{p}}\lesssim
  t^{\frac{1}{2}(\frac{3}{p}-3)} \nonumber
 \end{align}
 This is enough to bootstrap $\Gamma _{0}$, $\Gamma$ to smoothness in $\mathbb{R}^{3}\times (0,e^{T})$.
As mentioned above a second solution to (MHD) is sought as a trajectory on the unstable manifold of $\beta\Xi_0$ associated
to the most unstable eigenvalue $a$ of $L_{ss}^{\beta}$.

\subsection{ The semigroup generated by $L^{\beta}_{ss}$}

In this subsection, we introduce the $C^{0}$ semigroup $e^{\tau L^{\beta}_{ss}}$ generated by $L_{ss}^{\beta}$. We combine Theorem \ref{th3.1} and Theorem \ref{th4.1} to prove some results for the spectrum and the semigroup estimation of $L^{\beta}_{ss}$. It follows from \cite[Lemma 2.1]{JS15} that the spectrum of $L^{0}_{ss}$ satisfies
$\sigma (L^{0}_{ss})\subset \{\lambda\in\mathbb{C} : Re(\lambda) \leq - \frac{1}{4}\}.
$ According to Theorem \ref{th3.1}, it is known that $L^{\beta}_{ss}- L^{0}_{ss}$ has finite unstable discrete spectrum $\lambda > 0$ and the unstable subspace is finite-dimensional.

 We refer the reader to \cite{EN00} for definition of the $spectral~ bound$ of $L_{ss}^{\beta}$ as
\begin{equation*}
s(L_{ss}^{\beta})= :\sup\{Re (\lambda): \lambda\in \sigma (L_{ss}^\beta)\},
\end{equation*}
which is bounded by the $ growth~ bound$
\begin{equation*}
\omega_{0}(L^{\beta}_{ss}):= \inf \{\omega \in R: \|e^{\tau L^{\beta}_{ss}}\|_{L_{\sigma}^{2}\rightarrow L_{\sigma}^{2} } \leq M(\omega)e^{\tau \omega} \}
\end{equation*}
of semigroup. Then we can obtain the following result.
\begin{lemma}\label{lem 5.2}
  $ L_{ss}^{\beta}$ is the generator of a strongly continuous semigroup $ e^{\tau L_{ss}^{\beta}}$ on $L^{2}(\mathbb{R}^{3})$. $\sigma (L^{\beta}_{ss})\cap \{\lambda :Re (\lambda) > 0\}$ consists of only finitely many eigenvalues with finite multiplicity, then the growth bound $ \omega_{0}(L_{ss}^{\beta})$ of $e^{\tau L_{ss}^{\beta}}$ equals
 $$ s(L_{ss}^{\beta}):= \sup\{  z_{0}: z_{0}\in \sigma(L_{ss}^{\beta})< \infty \}.$$

 In other words, assume $a= s(L_{ss}^{\beta})> 0$, then $a< \infty$, and there exist $\lambda \in \sigma(L_{ss}^{\beta})$ and $\eta \in D(L_{ss}^{\beta})$ and $L_{ss}^{\beta} \eta = \lambda \eta$. Moreover, for every $\delta > 0$, there is a constant $M(\delta)$ with the property that
 \begin{equation}\label{e5.5}
 \| e^{\tau L_{ss}^{\beta}} \Xi(0,\cdot)\|_{L^{2}(\mathbb{R}^{3})}\leq M(\delta) e^{(a +\delta)\tau}\|\Xi(0,\cdot)\|_{L^{2}(\mathbb{R}^{3})},~~\forall~\tau \geq 0,\Xi\in L^{2}.
 \end{equation}
 \end{lemma}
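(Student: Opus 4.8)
The plan is to realize $L_{ss}^\beta$ as a relatively compact perturbation of the unforced self-similar operator $L_{ss}^0\Xi := \frac{1}{2}(1+\xi\cdot\nabla_\xi)\Xi + \Delta_\xi\Xi$, and then to extract both the generation property and the semigroup estimate from the spectral theory of such perturbations as developed in \cite{EN00}. Writing $L_{ss}^\beta = L_{ss}^0 + K_\beta$ with $K_\beta\Xi := -\beta\mathbb{P}(\mathfrak{B}(\Xi_0,\Xi)+\mathfrak{B}(\Xi,\Xi_0))$, the operator $K_\beta$ is a first-order differential operator whose coefficients are built from the smooth, decaying profile $\Xi_0$. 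Since $L_{ss}^0$ contains the Laplacian $\Delta_\xi$, the term $K_\beta$ is $L_{ss}^0$-bounded with relative bound zero, so the relatively bounded perturbation theorem for generators yields that $L_{ss}^\beta$ generates a strongly continuous semigroup $e^{\tau L_{ss}^\beta}$ on $L_\sigma^2(\mathbb{R}^3)$.

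First I would pin down the essential spectrum. By \cite[Lemma 2.1]{JS15} one has $\sigma(L_{ss}^0)\subset\{Re(\lambda)\leq -\frac14\}$, and the associated semigroup obeys $\|e^{\tau L_{ss}^0}\|\lesssim e^{-\tau/4}$, so its essential growth bound satisfies $\omega_{\mathrm{ess}}(e^{\tau L_{ss}^0})\leq -\frac14$. The decay of the coefficients of $K_\beta$, together with the local compactness furnished by $\Delta_\xi$ (Rellich-type embedding), makes $K_\beta$ relatively compact with respect to $L_{ss}^0$. Hence, by Weyl's theorem, $\sigma_{\mathrm{ess}}(L_{ss}^\beta)=\sigma_{\mathrm{ess}}(L_{ss}^0)\subset\{Re(\lambda)\leq -\frac14\}$, and, at the level of semigroups, the essential growth bounds coincide: $\omega_{\mathrm{ess}}(e^{\tau L_{ss}^\beta})=\omega_{\mathrm{ess}}(e^{\tau L_{ss}^0})\leq -\frac14$.

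Next I would localize the unstable spectrum. To the right of the essential spectrum, i.e.\ in $\{Re(\lambda)>-\frac14\}$, the spectrum of $L_{ss}^\beta$ is purely discrete, consisting of isolated eigenvalues of finite algebraic multiplicity. A resolvent bound for $L_{ss}^0$ on vertical strips prevents these eigenvalues from escaping to large $|Im(\lambda)|$, confining them to a bounded region; they are therefore finite in number in $\{Re(\lambda)\geq 0\}$. This proves $s(L_{ss}^\beta)=a<\infty$, and, combined with Theorem \ref{th4.1}, that $a>0$ is attained at an eigenvalue $\lambda$ with eigenfunction $\eta\in D(L_{ss}^\beta)$, whose smoothness follows by elliptic bootstrapping in the eigenvalue equation $L_{ss}^\beta\eta=\lambda\eta$ using the smooth decaying $\Xi_0$.

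Finally, I would deduce the semigroup estimate. Since $\omega_{\mathrm{ess}}(e^{\tau L_{ss}^\beta})\leq -\frac14<0<a=s(L_{ss}^\beta)$, the essential growth bound lies strictly below the spectral bound, and in this regime \cite{EN00} gives $\omega_0(e^{\tau L_{ss}^\beta})=\max\{\omega_{\mathrm{ess}}(e^{\tau L_{ss}^\beta}),\,s(L_{ss}^\beta)\}=a$, which is precisely the claimed identity $\omega_0=s$. Unwinding the definition of the growth bound then furnishes, for each $\delta>0$, a constant $M(\delta)$ with $\|e^{\tau L_{ss}^\beta}\Xi(0,\cdot)\|_{L^2}\leq M(\delta)e^{(a+\delta)\tau}\|\Xi(0,\cdot)\|_{L^2}$ for all $\tau\geq 0$, establishing (\ref{e5.5}). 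The main obstacle is exactly this last identity: for a general $C^0$ semigroup the spectral bound $s$ may be strictly smaller than the growth bound $\omega_0$, so the whole argument hinges on controlling the essential growth bound through the relative compactness of $K_\beta$, ensuring that the finite unstable point spectrum alone governs the exponential growth.
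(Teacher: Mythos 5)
First, a point of comparison: the paper does not actually prove Lemma \ref{lem 5.2} --- it is asserted on the strength of the preceding citations (\cite[Lemma 2.1]{JS15} for $\sigma(L^{0}_{ss})\subset\{\mathrm{Re}\,\lambda\le -\tfrac14\}$, Theorems \ref{th3.1} and \ref{th4.1} for the finite unstable spectrum, and the semigroup theory of \cite{EN00}), in imitation of \cite{ABC2}. Your skeleton --- write $L^{\beta}_{ss}=L^{0}_{ss}+K_{\beta}$, control $L^{0}_{ss}$ via \cite{JS15}, treat $K_{\beta}$ as a relatively compact perturbation, and finish with $\omega_{0}=\max\{s,\omega_{\mathrm{ess}}\}$ --- is exactly the intended route. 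However, two of your key invocations are not valid as stated, and both fail for the same reason: $e^{\tau L^{0}_{ss}}$ is \emph{not} an analytic semigroup. Explicitly, $(e^{\tau L^{0}_{ss}}u)(\xi)=e^{\tau/2}\big(e^{(e^{\tau}-1)\Delta}u\big)(e^{\tau/2}\xi)$, i.e.\ heat flow composed with an $L^{2}$ dilation, and the dilation destroys analyticity: $\xi\cdot\nabla_{\xi}\,e^{\tau L^{0}_{ss}}$ is unbounded on $L^{2}$ for every fixed $\tau>0$ (test on bumps translated to spatial infinity), so $\|L^{0}_{ss}e^{\tau L^{0}_{ss}}\|_{L^{2}\to L^{2}}=\infty$.

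Concretely: (i) there is no ``relatively bounded perturbation theorem'' for generators of general $C^{0}$ semigroups; relative bound zero suffices only in the analytic case (\cite[Ch.~III.2]{EN00}). Generation must instead be obtained from the Miyadera--Voigt theorem, using the smoothing bound $\|K_{\beta}e^{sL^{0}_{ss}}\|_{L^{2}\to L^{2}}\lesssim \beta s^{-1/2}$ (integrable at $s=0$), or from a dissipativity/energy argument, or by the direct mild-solution construction the paper carries out in physical variables in the proof of Lemma \ref{lem 5.5} (cf.\ (\ref{e5.10}), (\ref{e5.14})). (ii) More seriously, Weyl's theorem only controls $\sigma_{\mathrm{ess}}(L^{\beta}_{ss})$, and for a non-analytic $C^{0}$ semigroup there is no spectral mapping theorem, so the essential spectrum of the \emph{generator} gives no bound on the essential growth bound of the \emph{semigroup}; your sentence ``at the level of semigroups, the essential growth bounds coincide'' is precisely the step that cannot be inferred from relative compactness of $K_{\beta}$ alone. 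The correct argument shows that the semigroup difference $e^{\tau L^{\beta}_{ss}}-e^{\tau L^{0}_{ss}}=\int_{0}^{\tau}e^{(\tau-s)L^{\beta}_{ss}}K_{\beta}e^{sL^{0}_{ss}}\,ds$ is compact for each $\tau>0$ (the integrand is compact for $s\in(0,\tau)$ by the smoothing of $e^{sL^{0}_{ss}}$, the spatial decay of the coefficients built from $\Xi_{0}$, and Rellich; the integral converges in operator norm thanks to the $s^{-1/2}$ singularity), and then invokes the stability of $\omega_{\mathrm{ess}}$ under compact differences together with $\omega_{0}=\max\{s,\omega_{\mathrm{ess}}\}$ and the finiteness of $\sigma\cap\{\mathrm{Re}\,\lambda>w\}$ for $w>\omega_{\mathrm{ess}}$ (\cite[Ch.~IV.2]{EN00}). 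This also makes your ``resolvent bound on vertical strips'' step unnecessary, which is fortunate because it too is unjustified as stated: Hille--Yosida only bounds $\|(\lambda-L^{0}_{ss})^{-1}\|$ by $(\mathrm{Re}\,\lambda+\tfrac14)^{-1}$, with no decay as $|\mathrm{Im}\,\lambda|\to\infty$ in the non-analytic setting. With the compact-difference route, the finiteness of the unstable eigenvalues and the estimate (\ref{e5.5}) follow simultaneously.
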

 \begin{lemma}\label{lem 5.3}
 By (\ref{e5.2}), we have the following energy estimate
 \begin{equation}\label{e5.6}
 \| \Xi^{lim}\|_{H^{k}}\leq C(k, \eta)e^{ a_{0}\tau }
 \end{equation}
 \end{lemma}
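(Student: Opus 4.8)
The plan is to exploit the product structure of $\Xi^{lim}$ recorded in (\ref{e5.2}): its time dependence is carried entirely by the scalar factor $e^{a\tau}$, while the spatial profile is the fixed eigenfunction $\eta$. First I would recall from part (A) of Theorem \ref{thm5.1} that $\eta$ is the smooth eigenfunction associated to the maximally unstable eigenvalue $a > 0$ of $L_{ss}^{\beta}$, that it solves $L_{ss}^{\beta}\eta = a\eta$, and crucially that $\eta \in H^{k}(\mathbb{R}^{3})$ for every $k \geq 0$. In particular $\|\eta\|_{H^{k}}$ is a finite constant depending only on $k$ (and on $\eta$, equivalently on the choice of $\beta$ and the normalization of the eigenfunction).

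The core computation is then immediate and forms the whole of the lemma. Since $\Xi^{lim}(\tau,\cdot) = e^{a\tau}\,\eta(\cdot)$ and the scalar $e^{a\tau}$ is independent of the spatial variable $\xi$, every spatial derivative $\nabla_\xi^{\alpha}$ commutes with it, so for each fixed $\tau$ one has
\[
\|\Xi^{lim}(\tau)\|_{H^{k}} = e^{a\tau}\,\|\eta\|_{H^{k}}.
\]
Setting $C(k,\eta) := \|\eta\|_{H^{k}}$ and identifying the exponent $a_{0}$ with the eigenvalue $a$ yields the claimed estimate (\ref{e5.6}) as an equality, a fortiori as the stated inequality. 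No use of the semigroup estimate (\ref{e5.5}) from Lemma \ref{lem 5.2} is needed here; the bound is purely algebraic because $\Xi^{lim}$ is an exact (not merely approximate) solution of the linearized flow (\ref{e5.3}).

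The only substantive input, and hence the only place where anything genuinely has to be verified, is the finiteness of $\|\eta\|_{H^{k}}$ for all $k$; this is a statement about the eigenfunction itself rather than a dynamical growth estimate. It follows from a standard elliptic bootstrap applied to the stationary equation $L_{ss}^{\beta}\eta = a\eta$, using that the background profile $\Xi_{0}$ is smooth and decaying and that the principal part $-\Delta_{\xi}$ together with the drift $-\tfrac12(1+\xi\cdot\nabla_{\xi})$ supplies the regularity needed to upgrade $L^{2}$ membership to $H^{k}$ order by order. Since Theorem \ref{thm5.1}(A) already asserts $\eta \in H^{k}(\mathbb{R}^{3})$ for all $k$, I would simply invoke that fact rather than reproduce the bootstrap, so that the proof reduces to the one-line identity above; the \emph{only} mild care required is to keep $C(k,\eta)$ explicitly $k$-dependent, since $\|\eta\|_{H^{k}}$ grows with $k$.
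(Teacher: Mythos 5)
Your proposal is correct and coincides with the paper's intended argument: the paper states the lemma as an immediate consequence of (\ref{e5.2}), i.e.\ $\Xi^{lim}=e^{a\tau}\eta$ with $\eta\in H^{k}(\mathbb{R}^{3})$ for all $k\geq 0$ (Theorem \ref{thm5.1}(A)), so $\|\Xi^{lim}\|_{H^{k}}=e^{a\tau}\|\eta\|_{H^{k}}$ gives the bound with $C(k,\eta)=\|\eta\|_{H^{k}}$. Your identification of the exponent $a_{0}$ with the eigenvalue $a$ is also the right reading of what is evidently a notational slip in the paper.
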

 We consider the following system,
the linearized system of the homogeneous MHD system (\ref{e1.4}) around the axisymmetric steady solution $( \beta v_{0}, \beta H_{0}, \beta^{2}p_{0})(x)$ in (\ref{e2.1}) as
 \begin{equation}\label{e5.7}
 \left\{\begin{array}{llll}
 \partial_{t}u + \beta \mathbb{P}(v_{0}\cdot \nabla u+ u\cdot \nabla v_{0}- H_{0}\cdot \nabla B-B\cdot \nabla H_{0})- \Delta u= 0,\\
 \partial_{t}B + \beta \mathbb{P}(v_{0}\cdot \nabla B + u\cdot \nabla H_{0}- H_{0}\cdot \nabla u -B\cdot \nabla v_{0} )- \Delta B=0,\\
 \text{div} u =\text{div} B= 0,
 \end{array}\right.
\end{equation}
  can be rewrite as
 \begin{equation}\label{e5.8}
 \left\{\begin{array}{llll}
 \partial_{t}\Gamma +A \Gamma + \beta \mathbb{P}(\mathfrak{B}(\Gamma_{0},\Gamma)+ \mathfrak{B}(\Gamma, \Gamma_{0}))=0 \\
  \text{div}\Gamma= 0,
 \end{array}\right.
\end{equation}
in the sense of distribution, endowed with the initial condition
\begin{equation}\label{e5.9}
 (u, B)(0,x)= (u^{0}, B^{0})(x)
\end{equation}
where $ \Gamma = (u, B)$, $\Gamma_{0}= (v_{0}, H_{0}) $, $\Gamma^{0}=(u^{0}, B^{0})$.
In this case, the solution to (\ref{e5.8}) is formally given by
\begin{equation}\label{e5.10}
\Gamma = e^{t\Delta} \Gamma^{0}- \int_{0}^{t} e^{(t-s)\Delta}\beta \mathbb{P}(\mathfrak{B}(\Gamma_{0},\Gamma)+ \mathfrak{B}(\Gamma, \Gamma_{0}))(s) ds.
\end{equation}
\begin{lemma}\label{lem 5.4}\cite{Tsai}
Let $ \Omega \subset \mathbb{R}^{3}$ is a smooth bounded domain or $\mathbb{R}^{3}$,
 Let $1 < r\leq q < \infty$ and $\sigma = \sigma(q,r) = \frac{3}{2}(\frac{1}{r}- \frac{1}{q})\geq 0$, we have
\begin{align}\label{e5.11}
\| e^{t\Delta} \mathbb{P}f\|_{q}\leq C t^{-\sigma} \|f\|_{r}, \nonumber \\
\| \nabla e^{t\Delta} \mathbb{P}f\|_{q}\leq C t^{-\sigma-\frac{1}{2}} \|f\|_{r}.
\end{align}
\end{lemma}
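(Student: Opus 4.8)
The statement is the classical $L^{r}$--$L^{q}$ smoothing estimate for the Stokes semigroup $e^{t\Delta}\mathbb{P}$, and the plan is to reduce it to two standard ingredients: the scalar heat-kernel bounds and the $L^{p}$-boundedness of the Leray projection. I would treat $\Omega=\mathbb{R}^{3}$ in detail, since this is the case relevant to the rest of the paper and there both $e^{t\Delta}$ and $\mathbb{P}$ are Fourier multipliers and hence commute; the bounded-domain case I would only indicate, as it rests on a different (analytic-semigroup) technology.

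First I would record the scalar estimates for the heat semigroup. Its kernel is the Gaussian $G_{t}(x)=(4\pi t)^{-3/2}e^{-|x|^{2}/(4t)}$, which obeys the scaling $G_{t}(x)=t^{-3/2}G_{1}(x/\sqrt{t})$, and likewise $\nabla G_{t}(x)=t^{-2}(\nabla G_{1})(x/\sqrt{t})$. Writing $e^{t\Delta}g=G_{t}*g$ and applying Young's convolution inequality with $1+\tfrac1q=\tfrac1s+\tfrac1r$, so that $1-\tfrac1s=\tfrac1r-\tfrac1q$, gives $\|e^{t\Delta}g\|_{q}\le\|G_{t}\|_{s}\|g\|_{r}$. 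The scaling identity yields $\|G_{t}\|_{s}=t^{-\frac{3}{2}(1-\frac1s)}\|G_{1}\|_{s}=Ct^{-\sigma}$ with $\sigma=\tfrac32(\tfrac1r-\tfrac1q)$, and the analogous computation for $\nabla G_{t}$ produces exactly one extra factor of $t^{-1/2}$, i.e.\ $\|\nabla G_{t}\|_{s}=Ct^{-\sigma-\frac12}$. This establishes both lines of \eqref{e5.11} with $\mathbb{P}$ removed; the endpoint $r=q$ (that is $\sigma=0$) is included since $\|G_{t}\|_{1}=1$ uniformly in $t$.

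Next I would restore the Leray projection. On $\mathbb{R}^{3}$, $\mathbb{P}$ is the matrix Fourier multiplier with symbol $\delta_{jk}-\xi_{j}\xi_{k}/|\xi|^{2}$, which is smooth and homogeneous of degree $0$, so by the Mikhlin--H\"ormander multiplier theorem $\mathbb{P}$ is bounded on $L^{p}(\mathbb{R}^{3})$ for every $1<p<\infty$. Since $\mathbb{P}$, $e^{t\Delta}$ and the derivatives $\partial_{j}$ are all constant-coefficient Fourier multipliers, they commute componentwise, so $e^{t\Delta}\mathbb{P}f=\mathbb{P}\,e^{t\Delta}f$ and $\partial_{j}e^{t\Delta}\mathbb{P}f=\mathbb{P}\,\partial_{j}e^{t\Delta}f$. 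Applying the $L^{q}$-boundedness of $\mathbb{P}$ followed by the scalar estimates of the previous step gives
\[
\|e^{t\Delta}\mathbb{P}f\|_{q}\le C_{q}\|e^{t\Delta}f\|_{q}\le Ct^{-\sigma}\|f\|_{r},
\qquad
\|\nabla e^{t\Delta}\mathbb{P}f\|_{q}\le C_{q}\|\nabla e^{t\Delta}f\|_{q}\le Ct^{-\sigma-\frac12}\|f\|_{r},
\]
which is precisely \eqref{e5.11}.

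The whole-space argument presents no real obstacle; the genuinely delicate case is the bounded domain $\Omega$, where $\mathbb{P}$ and $\Delta$ no longer commute and $e^{t\Delta}\mathbb{P}$ must be replaced by the true Stokes semigroup $e^{-tA}$ with $A=-\mathbb{P}\Delta$ the Stokes operator subject to the boundary condition. There one instead invokes that $-A$ generates a bounded analytic semigroup on $L^{p}_{\sigma}(\Omega)$ (Solonnikov, Giga--Sohr), together with the Sobolev embedding $D(A^{\alpha})\hookrightarrow L^{q}$ for an appropriate fractional power, so that the analyticity bound $\|A^{\alpha}e^{-tA}\|\lesssim t^{-\alpha}$ produces the stated time weights $t^{-\sigma}$ and $t^{-\sigma-1/2}$. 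I expect this analytic-semigroup input on $\Omega$, rather than the elementary computation on $\mathbb{R}^{3}$, to be the real content one must borrow, which is exactly why the lemma is attributed to \cite{Tsai}.
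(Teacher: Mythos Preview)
The paper does not prove this lemma at all: it is stated with the citation \cite{Tsai} and used as a black box, so there is no ``paper's own proof'' to compare against. Your argument is correct and is precisely the standard one---Young's inequality on the Gaussian kernel (and its gradient) for the heat estimates, Mikhlin--H\"ormander for the $L^{p}$-boundedness of $\mathbb{P}$, and commutation of Fourier multipliers on $\mathbb{R}^{3}$---with an accurate indication that the bounded-domain case requires the deeper analytic-semigroup theory of the Stokes operator, which is exactly the content deferred to the reference.
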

 \begin{lemma}\label{lem 5.5}(Parabolic regularity)
Assume $a = s(L_{ss}^{\beta} ) > 0$, for any $ \sigma_{2} \geq \sigma_{1}\geq 0$ and  $\delta > 0 $, it
holds
\begin{equation}\label{e5.12}
\| e^{\tau L_{ss}^{\beta}} \Xi(0,\cdot)\|_{H^{\sigma_{2}}}\leq M(\sigma_{1},\sigma_{2} ,\delta, \beta) e^{(a+\delta)\tau}\tau^{-\frac{\sigma_{2}-\sigma_{1}}{2}}\|\Xi(0,\cdot)\|_{H^{\sigma_{1}} }
\end{equation}
for any $ \Xi(0,\cdot)\in L_{\sigma}^{2}\cap H^{\sigma_{1}}(\mathbb{R}^{3})$.
 \end{lemma}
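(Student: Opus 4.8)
The plan is to obtain \eqref{e5.12} from weighted energy estimates for the evolution $\partial_\tau\Xi = L_{ss}^\beta\Xi$, with the sharp exponential rate $a+\delta$ supplied by the $L^2$ growth bound of Lemma \ref{lem 5.2}. By interpolation in the $H^\sigma$ scale and the semigroup law it suffices to prove the endpoint family
\[
\|e^{\tau L_{ss}^\beta} g\|_{H^m} \le M(m,\delta,\beta)\, e^{(a+\delta)\tau}\,(1\wedge\tau)^{-m/2}\,\|g\|_{L^2}, \qquad m\in\mathbb{N},
\]
from which the two-parameter inequality \eqref{e5.12} follows by raising the lower index with a first half-step of the semigroup and interpolating. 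Throughout write $\Xi(\tau)=e^{\tau L_{ss}^\beta}g$ and recall $L_{ss}^\beta\Xi = \Delta_\xi\Xi + \tfrac12(1+\xi\cdot\nabla_\xi)\Xi - \beta\mathbb{P}\bigl(\mathfrak{B}(\Xi_0,\Xi)+\mathfrak{B}(\Xi,\Xi_0)\bigr)$.

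The core computation is the differential identity for $\|\nabla^m\Xi\|_{L^2}^2$. The dissipative term produces $-2\|\nabla^{m+1}\Xi\|_{L^2}^2$. For the drift, integration by parts gives $\langle\xi\cdot\nabla w,w\rangle = -\tfrac32\|w\|_{L^2}^2$ together with the commutator relation $[\nabla^m,\xi\cdot\nabla]=m\nabla^m$, so $\tfrac12(1+\xi\cdot\nabla_\xi)$ contributes only the bounded multiple $(m-\tfrac12)\|\nabla^m\Xi\|_{L^2}^2$ and is therefore harmless despite being unbounded. For the linearized term I use that $\mathrm{div}\,\Xi_0=0$, which kills the top-order transport part $\langle\Xi_0\cdot\nabla\nabla^m\Xi,\nabla^m\Xi\rangle=0$; all surviving contributions carry at least one derivative on the smooth, rapidly decaying profile $\Xi_0$ (from the Remark after Theorem \ref{th2.1}) and are bounded, after one Cauchy--Schwarz, by $\tfrac12\|\nabla^{m+1}\Xi\|_{L^2}^2 + C_{m}\beta^2\|\Xi\|_{H^m}^2$, the first piece being absorbed into the dissipation. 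Collecting terms yields
\[
\tfrac{d}{d\tau}\|\nabla^m\Xi\|_{L^2}^2 \le -\|\nabla^{m+1}\Xi\|_{L^2}^2 + C_{m,\beta}\,\|\Xi\|_{H^m}^2 .
\]

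Smoothing now comes from a time-weighted induction on $m$. Multiplying the identity by $\tau^m$ and using $\tfrac{d}{d\tau}(\tau^m\|\nabla^m\Xi\|_{L^2}^2) = m\tau^{m-1}\|\nabla^m\Xi\|_{L^2}^2 + \tau^m\tfrac{d}{d\tau}\|\nabla^m\Xi\|_{L^2}^2$, then integrating on $(0,\tau)$ and inserting the already-established bound for $\int_0^\tau s^{m-1}\|\nabla^m\Xi\|_{L^2}^2\,ds$ (which is controlled by the level-$(m-1)$ dissipation term) closes the induction and produces $\tau^m\|\nabla^m\Xi(\tau)\|_{L^2}^2 \lesssim e^{C_{m,\beta}\tau}\|g\|_{L^2}^2$ for $\tau\le 1$, i.e.\ the factor $(1\wedge\tau)^{-m/2}$; the base case $m=0$ is \eqref{e5.5}. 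This gives the endpoint family with the crude growth constant $C_{m,\beta}$ in place of $a+\delta$.

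The main obstacle is upgrading this crude rate to the sharp $a+\delta$, since the energy method only sees the coefficient bounds and not the spectrum. I would resolve it with a two-scale argument: for $\tau\ge 1$ factor $e^{\tau L_{ss}^\beta} = e^{L_{ss}^\beta}\,e^{(\tau-1)L_{ss}^\beta}$, apply the sharp $L^2\!\to\!L^2$ estimate \eqref{e5.5} of Lemma \ref{lem 5.2} to $e^{(\tau-1)L_{ss}^\beta}g$ to gain the factor $e^{(a+\delta)(\tau-1)}$, and then use the energy smoothing over the single unit interval $[\tau-1,\tau]$ (whose constant depends only on $m,\delta,\beta$) to gain the $m$ derivatives at bounded exponential cost; for $\tau\le 1$ the pure smoothing estimate already suffices. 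A secondary technical point is that the formal energy identities must be justified by carrying them out on mollified or Galerkin approximations of $g$ and passing to the limit, which is routine given the parabolic regularization; the genuinely delicate structural inputs are the two exact cancellations ($\langle\xi\cdot\nabla w,w\rangle$ and $\mathrm{div}\,\Xi_0=0$) that tame the unbounded drift and the principal transport term.
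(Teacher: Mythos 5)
Your proof is correct, and its global architecture coincides with the paper's: both split the argument into a short-time parabolic smoothing estimate with a crude exponential constant, and a long-time step that recovers the sharp rate $a+\delta$ by factoring $e^{\tau L_{ss}^{\beta}}=e^{\kappa L_{ss}^{\beta}}e^{(\tau-\kappa)L_{ss}^{\beta}}$, applying the $L^{2}\to L^{2}$ growth bound \eqref{e5.5} of Lemma \ref{lem 5.2} to the long factor and unit-interval smoothing to the short one (the paper takes $\kappa=1$, exactly your two-scale step). Where you genuinely diverge is in how the smoothing estimate is produced. The paper undoes the self-similar change of variables, passing to physical variables $u(x,t)=(t+1)^{-1/2}U\bigl(x/\sqrt{t+1},\log(t+1)\bigr)$, so that the unbounded drift $\tfrac12(1+\xi\cdot\nabla_{\xi})$ disappears and the evolution becomes the linearized system \eqref{e5.7}--\eqref{e5.8}, a heat flow plus bounded-coefficient lower-order terms; the bound $\|\Gamma\|_{2}+t^{1/2}\|\nabla\Gamma\|_{2}\lesssim\|\Gamma(0,\cdot)\|_{2}$ on $(0,10)$ then comes from standard parabolic theory (cf.\ Lemma \ref{lem 5.4}), is transported back to similarity variables on $\tau\in(0,2)$, and is iterated in $k$ by induction. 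You instead stay in similarity variables and tame the drift directly via $\langle\xi\cdot\nabla w,w\rangle=-\tfrac32\|w\|_{L^{2}}^{2}$ and $[\nabla^{m},\xi\cdot\nabla]=m\nabla^{m}$, followed by a time-weighted induction; this is more self-contained (the paper's ``we can easily prove'' hides precisely this work, done on the other side of the change of variables) but costs commutator bookkeeping. One detail in your cancellation deserves care: in the MHD coupling the magnetic transport terms do not vanish individually when paired with $\nabla^{m}V$ and $\nabla^{m}W$; they cancel only in the sum, through the antisymmetry $\mathfrak{B}_{0}(\Phi^{1},\Phi^{2},\Phi^{2})=0$ of the trilinear form (which itself uses divergence-freeness of both components of $\Phi^{1}$), so your identity $\langle\Xi_{0}\cdot\nabla\nabla^{m}\Xi,\nabla^{m}\Xi\rangle=0$ must be read through that paired structure rather than as a statement about a single transport field. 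Finally, your interpolation to non-integer $\sigma_{1},\sigma_{2}$ goes slightly beyond the paper, whose proof concludes \eqref{e5.12} only for integer exponents.
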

 \begin{proof}
  Firstly, we prove for any $0 \leq m\leq k$, $\Xi(0,\cdot)\in L_{\sigma}^{2}\cap H^{m}(\mathbb{R}^{3})$, it holds
\begin{equation}\label{e5.13}
 \|e^{\tau L_{ss}^{\beta}} \Xi(0,\cdot)\|_{H^{k}}\leq M(k,m) \tau^{-\frac{k-m}{2}}\|\Xi(0,\cdot)\|_{H^{m}}, \tau \in (0,2).
\end{equation}
We study the problem in physical variables: setting
$$ u(x,t)=\frac{1}{\sqrt{t+1}}U( \frac{x}{\sqrt{t+1}}, log^{(t+1)} ),~~ B(x,t)=\frac{1}{\sqrt{t+1}}W(\frac{x}{\sqrt{t+1}}, log^{(t+1)}),$$
$$v_{0}(x,t)=\frac{1}{\sqrt{t+1}}V_{0}( \frac{x}{\sqrt{t+1}}, log^{(t+1)} ),~~ H_{0}(x,t)=\frac{1}{\sqrt{t+1}}W_{0}(\frac{x}{\sqrt{t+1}}, log^{(t+1)}),$$
satisfying equation (\ref{e5.7}), which can be expressed as (\ref{e5.8}).
 Since $\Gamma_{0}$ is smooth function in bounded domain, we can easily prove that
\begin{equation}\label{e5.14}
\|\Gamma\|_{2}+ t^{\frac{1}{2}}\|\nabla \Gamma\|_{2}\leq C(\Gamma_{0}, \beta) \|\Gamma(0,\cdot)\|_{2},~~~~~~~~~~t\in (0, 10)
\end{equation}
which gives
\begin{equation}\label{e5.15}
\|\Xi\|_{2}+ \tau^{\frac{1}{2}}\|\nabla \Xi\|_{2}\leq C(\Xi_{0}, \beta) \|\Xi(0,\cdot)\|_{2},~~~~~~~~~~\tau\in (0, 2)
\end{equation}
The latter implies (\ref{e5.12}) for $k = 1$ and $m = 0,1 $. The general case follows by induction studying the equation solved by
$\nabla^{k} \Xi$ which has a structure similar to (\ref{e5.8}) but with forcing and additional lower order terms.\\
 Secondly, we prove for any $\delta > 0$ it holds
\begin{equation}\label{e5.16}
\|e^{\tau L_{ss}^{\beta}} \Xi(0,\cdot)\|_{H^{k}}  \leq M(k, \delta)e^{\tau (a+\delta)} \|\Xi(0,\cdot)\|_{L^{2}},
~~\tau\geq 2.
\end{equation}
Using the semigroup property in Step 1 with $m = 0 $, and (\ref{e5.6}), we have
\begin{align}\label{e5.17}
\|e^{\tau L_{ss}^{\beta}} \Xi(0,\cdot)\|_{H^{k}} &= \|e^{\kappa L_{ss}^{\beta}}(e^{(\tau-k) L_{ss}^{\beta}} \Xi(0,\cdot))\|_{H^{k}} \nonumber \\
& \leq M(k)k^{-\frac{k}{2}} \|e^{\kappa L_{ss}^{\beta}}(e^{(\tau-k)L_{ss}^{\beta}} \Xi(0,\cdot))\|_{L^{2}} \nonumber \\
& \leq M(k, \delta)k^{-\frac{k}{2}} e^{(\tau-k)(a+\delta)}\|\Xi(0,\cdot)\|_{L^{2}}
\end{align}
The claimed inequality (\ref{e5.12}) follows by choosing $ \kappa = 1$.\\
It is immediate to see that the combination of (\ref{e5.13}) and (\ref{e5.16})
gives (\ref{e5.12}) for integers $\sigma_{2} \geq \sigma_{1} \geq 0$. This completes the proof.
\end{proof}
\subsection{ Nonlinear construction}
In the conditions of Theorem 3.1, we consider smooth, compactly supported velocity and magnetic profile
 $ ( \beta v_{0}, \beta H_{0})$. Since $(u^{lim}, B^{lim})$ satisfy (\ref{e5.7}), we can get $(u^{per}, B^{per})$ satisfies the following nonlinear problem
\begin{align}\label{e5.18}
 \left\{\begin{array}{llll}
 &\partial_{t}u^{per} + \beta \mathbb{P}(v_{0}\cdot \nabla u^{per}+ u^{per}\cdot \nabla v_{0}- H_{0}\cdot \nabla H^{per}-H^{per}\cdot \nabla H_{0} )- \Delta u^{per}  \\
 &= B\cdot \nabla B -u\cdot \nabla u,\\
 &\partial_{t}B^{per} + \beta \mathbb{P}(v_{0}\cdot \nabla B^{per} + u^{per}\cdot \nabla H_{0}- H_{0}\cdot \nabla u^{per} -B^{per}\cdot \nabla v_{0})- \Delta B^{per} \\
 &=B\cdot \nabla u- u\cdot \nabla B,\\
 &div~u=div B= 0,
 \end{array}\right.
 \end{align}
 we can write $f_{1}= B\cdot \nabla B -u\cdot \nabla u$ and $f_{2}= B\cdot \nabla u- u\cdot \nabla B$, in which $B(x,t)= B^{lim}+B^{per}$
 and $u(x,t)= u^{lim}+ u^{per}$.
   In similarity variable,
\begin{equation*}
\begin{split}
  &\xi= \frac{x}{{t}}, ~~~~~~~\tau= log t, ~~~~~~~~~f(x,t)= \frac{1}{t^{\frac{3}{2}}}F(\xi,\tau) \\
  &u^{lim}(x,t)=\frac{1}{\sqrt{t}}V^{lim}(\xi,\tau ),~~ B^{lim}(x,t)=\frac{1}{\sqrt{t}}W^{per}(\xi,\tau ), \nonumber \\
  &u^{per}(x,t)=\frac{1}{\sqrt{t}}V^{per}(\xi,\tau ),~~ B^{per}(x,t)=\frac{1}{\sqrt{t}}W^{per}(\xi,\tau ),
\end{split}
\end{equation*}
then (\ref{e5.18}) can be expressed as
 \begin{align}\label{e5.19}
 \left\{\begin{array}{llll}
 &\partial_{\tau}V^{per} -\frac{1}{2}(1+\xi\cdot \nabla_{\xi}) V^{per}-\Delta_{\xi}V^{per}+ \beta \mathbb{P}(V_{0}\cdot \nabla V^{per}+ V^{per}\cdot \nabla V_{0}- W_{0}\cdot \nabla W^{per} \\
 &-W^{per}\cdot \nabla W_{0})= F_{1},\\
 &\partial_{\tau}W^{per} -~\frac{1}{2}(1+\xi\cdot \nabla_{\xi}) W^{per}
 -\Delta_{\xi}W^{per}+ \beta \mathbb{P}(V_{0}\cdot \nabla W^{per} + V^{per}\cdot \nabla W_{0}- W_{0}\cdot \nabla V^{per} \\
 &-W^{per}\cdot \nabla V_{0} )=F_{2} \\
 & \text{div} V^{per} =\text{div} W^{per} = 0 ,
 \end{array}\right.
 \end{align}
 where
 \begin{align}\label{e5.20}
 F_{1}&=(W^{lim}+W^{per})\cdot \nabla( W^{lim}+W^{per})- (V^{lim}+ V^{per})\cdot \nabla (V^{lim}+ V^{per}) \nonumber \\
      & =W^{lim}\cdot \nabla W^{lim}+  W^{per}\cdot \nabla W^{lim}+ W^{lim}\cdot \nabla W^{per}+
       W^{per}\cdot\nabla W^{per}-V^{lim}\cdot\nabla V^{lim}\nonumber \\
       &-V^{per}\cdot\nabla V^{lim} -V^{lim}\cdot \nabla V^{per}- V^{per}\cdot \nabla V^{per}
 \end{align}
 and
\begin{align}\label{e5.21}
 F_{2}&= (W^{lim}+W^{per})\cdot \nabla (V^{lim}+ V^{per})- (V^{lim}+ V^{per})\cdot \nabla (W^{lim}+W^{per})\nonumber \\
      & =W^{lim}\cdot \nabla V^{lim}+  W^{per}\cdot \nabla V^{lim}+ W^{lim}\cdot \nabla V^{per}+
       W^{per}\cdot\nabla V^{per}-V^{lim}\cdot\nabla W^{lim}\nonumber \\
      &-V^{per}\cdot\nabla W^{lim} -V^{lim}\cdot \nabla W^{per}- V^{per}\cdot \nabla W^{per}.
 \end{align}
 Note that using the same method as in section 2, we can rewrite equation (\ref{e5.19}) as
 \begin{equation}\label{e5.22}
 \partial_{\tau} \Xi^{per}-  \frac{1}{2}(1+\xi\cdot \nabla_{\xi})\Xi^{per}-\Delta_{\xi}\Xi^{per}+ \beta\mathbb{P}(\mathfrak{B}(\Xi_{0} , \Xi^{per})+ \mathfrak{B}( \Xi^{per}, \Xi_{0} ))= F,
 \end{equation}
where $\Xi^{per}=( V^{per}(\xi, \tau), W^{per}(\xi, \tau))$, $\Xi_{0} =(V_{0}(\xi, \tau), W_{0}(\xi, \tau))$, $ F= ( F_{1} ,F_{2})$ and $\mathbb{P}$ is the Leray projector.
 We define the total energy by
 $$ \|\Xi^{per}\|_{\mathbb{X}}= \|(V^{per}, W^{per})\|_{\mathbb{X}}:= \sup_{\tau < T} e^{-(a+\varepsilon_{0})\tau}
  (\|V^{per}(\cdot, \tau)\|_{H^{N}}+ \|W^{per}(\cdot, \tau)\|_{H^{N}} ).$$
 \begin{proposition}\label{pro5.6}
 Assume $ a= s(L_{ss}^{\beta})> 0 $ and $N > \frac{5}{2} $ is an integer. Then there
exist $T = T(\Xi_{0} , \Xi^{lim})$, $ \varepsilon_{0} > 0 $ and  $ \Xi^{per} \in C((-\infty, T]; H^{N}(\mathbb{R}^{3}; \mathbb{R}^{3}))$,
a solution to (\ref{e5.22}), such that
\begin{equation}\label{e5.23}
\|E^{per}(\cdot,\tau)\|_{H^{N}}\leq e^{(a+\varepsilon_{0})\tau},~~~~for ~ any~\tau < T
\end{equation}
\end{proposition}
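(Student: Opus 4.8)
The plan is to recast (\ref{e5.22}) as a Duhamel (mild) fixed-point problem and solve it by the contraction mapping principle in the weighted space $\mathbb{X}$. Write the quadratic forcing assembled from (\ref{e5.20})--(\ref{e5.21}) as $\mathcal{N}(\Xi^{per}) := F$, and recall from Lemma \ref{lem 5.2} that $L_{ss}^{\beta}$ generates the strongly continuous semigroup $e^{\tau L_{ss}^{\beta}}$. A solution of (\ref{e5.22}) vanishing as $\tau \to -\infty$ is then a fixed point of
\[
\mathcal{T}(\Xi^{per})(\tau) := \int_{-\infty}^{\tau} e^{(\tau - s)L_{ss}^{\beta}}\,\mathbb{P}\,\mathcal{N}(\Xi^{per})(s)\,ds .
\]
First I would split $F$ according to its dependence on $\Xi^{lim}$ and $\Xi^{per}$: a source term $F_{0}$ quadratic in $\Xi^{lim}$ alone, a cross term $F_{1}$ linear in each of $\Xi^{lim}$ and $\Xi^{per}$, and a term $F_{2}$ quadratic in $\Xi^{per}$. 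By Lemma \ref{lem 5.3} one has $\|\Xi^{lim}(s)\|_{H^{N}} \lesssim e^{as}$, while by the definition of the $\mathbb{X}$-norm $\|\Xi^{per}(s)\|_{H^{N}} \le \|\Xi^{per}\|_{\mathbb{X}}\,e^{(a+\varepsilon_{0})s}$.

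The decisive analytic input is the parabolic smoothing estimate of Lemma \ref{lem 5.5}: for small $\delta>0$,
\[
\|e^{(\tau-s)L_{ss}^{\beta}}\mathbb{P}\,g\|_{H^{N}} \lesssim e^{(a+\delta)(\tau-s)}(\tau-s)^{-\frac12}\|g\|_{H^{N-1}},
\]
which recovers the one derivative lost in the nonlinearity at the price of an integrable singularity $(\tau-s)^{-1/2}$. Since $N>\frac52$, the space $H^{N-1}(\mathbb{R}^{3})$ is a Banach algebra, so the bilinear operator obeys $\|\mathbb{P}\,\mathfrak{B}(\Phi,\Psi)\|_{H^{N-1}} \lesssim \|\Phi\|_{H^{N}}\|\Psi\|_{H^{N}}$; this is exactly where the hypothesis $N>\frac52$ enters. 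Combining the two facts reduces each contribution to a time convolution. For the source term,
\[
\left\|\int_{-\infty}^{\tau} e^{(\tau-s)L_{ss}^{\beta}}\mathbb{P}F_{0}(s)\,ds\right\|_{H^{N}} \lesssim \int_{-\infty}^{\tau} e^{(a+\delta)(\tau-s)}(\tau-s)^{-\frac12}e^{2as}\,ds \lesssim e^{2a\tau},
\]
the integral converging at both endpoints because $\delta<a$; the cross and quadratic terms yield, respectively, $\lesssim \|\Xi^{per}\|_{\mathbb{X}}\,e^{(2a+\varepsilon_{0})\tau}$ and $\lesssim \|\Xi^{per}\|_{\mathbb{X}}^{2}\,e^{2(a+\varepsilon_{0})\tau}$, each convolution converging once $\delta$ is taken smaller than $a$.

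Measuring these in the $\mathbb{X}$-norm, that is multiplying by $e^{-(a+\varepsilon_{0})\tau}$ and taking the supremum over $\tau<T$, the three contributions are bounded by $e^{(a-\varepsilon_{0})T}$, $\|\Xi^{per}\|_{\mathbb{X}}\,e^{aT}$ and $\|\Xi^{per}\|_{\mathbb{X}}^{2}\,e^{(a+\varepsilon_{0})T}$ respectively, after fixing $0<\varepsilon_{0}<a$. Hence on the ball $\{\|\Xi^{per}\|_{\mathbb{X}}\le R\}$ one gets $\|\mathcal{T}(\Xi^{per})\|_{\mathbb{X}} \lesssim e^{(a-\varepsilon_{0})T}+Re^{aT}+R^{2}e^{(a+\varepsilon_{0})T}$, and an analogous computation using the bilinearity of $\mathfrak{B}$ gives $\|\mathcal{T}(\Xi_{1}^{per})-\mathcal{T}(\Xi_{2}^{per})\|_{\mathbb{X}} \lesssim \bigl(e^{aT}+R\,e^{(a+\varepsilon_{0})T}\bigr)\|\Xi_{1}^{per}-\Xi_{2}^{per}\|_{\mathbb{X}}$. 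Choosing $R$ of the order of the source size and then $T=T(\Xi_{0},\Xi^{lim})$ sufficiently negative makes $\mathcal{T}$ a self-map and a contraction on that ball; the Banach fixed-point theorem produces a unique $\Xi^{per}$ with $\|\Xi^{per}\|_{\mathbb{X}}\le R$, which is the bound (\ref{e5.23}), and strong continuity of the semigroup gives $\Xi^{per}\in C((-\infty,T];H^{N})$. The main obstacle is the exponent bookkeeping in the Duhamel convolutions: one must simultaneously absorb the derivative loss through the smoothing factor $(\tau-s)^{-1/2}$, keep the semigroup growth $e^{(a+\delta)(\tau-s)}$ below the decay of the forcing as $s\to-\infty$ (forcing $\delta<a$ and the source to live at rate $2a>a+\varepsilon_{0}$), and make the residual $e^{cT}$ prefactors small — this is precisely what confines the construction to a left half-line $(-\infty,T]$ rather than all of $\mathbb{R}$.
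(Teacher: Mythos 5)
Your proposal is correct and takes essentially the same route as the paper: the paper likewise sets up the Duhamel map $\mathcal{T}$ in the weighted space $\mathbb{X}$, splits the forcing into a source term quadratic in $\Xi^{lim}$, a cross term, and a term bilinear in $\Xi^{per}$, and closes the contraction via the smoothing estimate of Lemma \ref{lem 5.5}, the decay estimate of Lemma \ref{lem 5.3}, and the algebra property of $H^{N-1}$ (this being where $N>\tfrac{5}{2}$ enters), with exactly your exponent bookkeeping $e^{(a-\varepsilon_{0})T}$, $e^{aT}$, $e^{(a+\varepsilon_{0})T}$ and the choice of $T$ sufficiently negative. The only cosmetic difference is that you contract on a ball of radius $R$ adapted to the source size, while the paper (in the proposition following this one) uses the closed unit ball of $\mathbb{X}$.
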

\begin{proof}
Firstly, we introduce the Banach space
$$ \mathbb{X}:= \{\Xi^{per}\in C((-\infty,T];H^{N}(\mathbb{R}^{3}; \mathbb{R}^{3})):  \sup_{\tau < T} e^{-(a+\varepsilon_{0})\tau} \|\Xi^{per}(\cdot, \tau)\|_{H^{N}}<\infty \}$$
with the norm
$$ \|\Xi^{per}\|_{\mathbb{X}}:= \sup_{\tau < T} e^{-(a+\varepsilon_{0})\tau} \|\Xi^{per}(\cdot, \tau)\|_{H^{N}}$$
By Duhamel's formula, (\ref{e5.22}) can  be expressed as the following functional
\begin{equation}\label{e5.24}
\mathcal{T}(\Xi^{per})(\cdot, \tau)= -\int_{-\infty}^{\tau} e^{(\tau-s)L_{ss}^{\beta}}\circ \mathbb{P} Fds
\end{equation}
By parabolic regularity theory, any $\Xi^{per}\in \mathbb{X} $ such that $T (\Xi^{per}) = \Xi^{per} $ is a solution to (\ref{e5.22}) satisfying the statement of the Proposition by the following contraction mapping principle.
 \end{proof}

 \begin{proposition}
 Let $B_{\mathbb{X}}$ be the closed unit ball of $\mathbb{X}$. Then for $T$ sufficiently large and negative, and $N > \frac{5}{2}$, $\mathcal{T}$ map $B_{\mathbb{X}}\rightarrow B_{\mathbb{X}}$ is a contraction.
\end{proposition}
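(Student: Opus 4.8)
The plan is to verify the two defining properties of a contraction on $B_{\mathbb{X}}$, the self-mapping property and the Lipschitz bound with constant strictly less than $1$, by combining the parabolic smoothing estimate of Lemma \ref{lem 5.5} with the exponential bound $\|\Xi^{lim}\|_{H^{k}}\lesssim e^{a\tau}$ of Lemma \ref{lem 5.3} and a product estimate for the nonlinearity $F$. The starting observation is that $N>\frac{5}{2}$ forces $N-1>\frac{3}{2}$, so $H^{N-1}(\mathbb{R}^{3})$ is a Banach algebra and the Leray projection is bounded on it; consequently the bilinear form $Q(\Phi,\Psi):=\mathbb{P}(\Phi\cdot\nabla\Psi)$ obeys $\|Q(\Phi,\Psi)\|_{H^{N-1}}\lesssim\|\Phi\|_{H^{N}}\|\Psi\|_{H^{N}}$. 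Writing $\Xi=\Xi^{lim}+\Xi^{per}$ and expanding (\ref{e5.20})--(\ref{e5.21}), the projected forcing $\mathbb{P}F$ is a finite sum of terms $Q(\cdot,\cdot)$ evaluated on the two constituents, so for $\Xi^{per}\in B_{\mathbb{X}}$ one obtains
\[
\|\mathbb{P}F(s)\|_{H^{N-1}}\lesssim \|\Xi^{lim}\|_{H^{N}}^{2}+\|\Xi^{lim}\|_{H^{N}}\|\Xi^{per}\|_{H^{N}}+\|\Xi^{per}\|_{H^{N}}^{2}\lesssim e^{2as},
\]
where the final inequality uses $\|\Xi^{per}(s)\|_{H^{N}}\le e^{(a+\varepsilon_{0})s}$ together with $2a<2a+\varepsilon_{0}<2a+2\varepsilon_{0}$, so that the slowest-decaying exponent $e^{2as}$ dominates for $s<T<0$.

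For the self-mapping property I would insert this bound into the Duhamel representation (\ref{e5.24}). Applying Lemma \ref{lem 5.5} with $\sigma_{2}=N$ and $\sigma_{1}=N-1$ (and a fixed small $\delta<a$) supplies the smoothing factor $(\tau-s)^{-\frac{1}{2}}e^{(a+\delta)(\tau-s)}$, whence
\[
\|\mathcal{T}(\Xi^{per})(\cdot,\tau)\|_{H^{N}}\lesssim\int_{-\infty}^{\tau}(\tau-s)^{-\frac{1}{2}}e^{(a+\delta)(\tau-s)}e^{2as}\,ds.
\]
The substitution $r=\tau-s$ factors out $e^{2a\tau}$ and leaves $\int_{0}^{\infty}r^{-1/2}e^{-(a-\delta)r}\,dr$, a convergent Gamma-type integral: the $r^{-1/2}$ singularity is integrable at the origin and the exponential decays at infinity because $\delta<a$. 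Hence $\|\mathcal{T}(\Xi^{per})(\cdot,\tau)\|_{H^{N}}\lesssim e^{2a\tau}$, and multiplying by the weight $e^{-(a+\varepsilon_{0})\tau}$ and taking the supremum over $\tau<T$ yields $\|\mathcal{T}(\Xi^{per})\|_{\mathbb{X}}\lesssim e^{(a-\varepsilon_{0})T}$. Choosing $\varepsilon_{0}<a$ makes the exponent positive, so $T$ sufficiently large and negative forces the right-hand side below $1$, giving $\mathcal{T}\colon B_{\mathbb{X}}\to B_{\mathbb{X}}$.

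For the Lipschitz bound I would exploit the bilinearity of $Q$ to telescope the difference: for $\Xi^{per}_{1},\Xi^{per}_{2}\in B_{\mathbb{X}}$ the purely $\Xi^{lim}$ contributions cancel and the remainder is linear in $\Xi^{per}_{1}-\Xi^{per}_{2}$, so that
\[
\|\mathbb{P}F(\Xi^{per}_{1})-\mathbb{P}F(\Xi^{per}_{2})\|_{H^{N-1}}\lesssim\big(\|\Xi^{lim}\|_{H^{N}}+\|\Xi^{per}_{1}\|_{H^{N}}+\|\Xi^{per}_{2}\|_{H^{N}}\big)\|\Xi^{per}_{1}-\Xi^{per}_{2}\|_{H^{N}}.
\]
Inserting the same bounds gives the pointwise estimate $\lesssim e^{(2a+\varepsilon_{0})s}\|\Xi^{per}_{1}-\Xi^{per}_{2}\|_{\mathbb{X}}$, and running the Duhamel/substitution computation with the exponent $2a+\varepsilon_{0}$ in place of $2a$ produces $\|\mathcal{T}(\Xi^{per}_{1})-\mathcal{T}(\Xi^{per}_{2})\|_{\mathbb{X}}\lesssim e^{aT}\|\Xi^{per}_{1}-\Xi^{per}_{2}\|_{\mathbb{X}}$; again $T$ sufficiently negative makes the prefactor strictly less than $1$. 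The main obstacle is the first step: establishing the product estimate for $\mathbb{P}F$ at the sharp regularity $H^{N-1}$ while respecting the derivative loss and the Leray projection, and then arranging the interplay of the three exponents $2a$, $2a+\varepsilon_{0}$ and $a+\delta$ so that both the short-time singularity and the long-time exponential in the Duhamel integral remain controlled. Once those exponents are balanced, the smallness required for both conclusions is obtained for free by sending $T\to-\infty$.
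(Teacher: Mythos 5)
Your proposal is correct and follows essentially the same route as the paper: both split the Duhamel map into its constant (lim--lim), linear (lim--per cross), and bilinear (per--per) parts, estimate the forcing via the $H^{N-1}$ algebra property, apply the semigroup smoothing bound of Lemma \ref{lem 5.5} together with the $e^{a\tau}$ decay of $\Xi^{lim}$ from Lemma \ref{lem 5.3}, and obtain both the self-mapping bound and the Lipschitz constant $\lesssim e^{aT}$ (resp. $e^{(a-\varepsilon_{0})T}$), which are made $<1$ by taking $T\to-\infty$. The only difference is presentational: the paper records operator norms of the pieces $G$, $L$, $B$ and concludes via $2\|B\|+\|L\|<1$, whereas you verify the self-map and telescoped Lipschitz estimates directly.
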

\begin{proof}
According to the definition of total energy, $\mathcal{T}$ splits into
three terms $\mathcal{T}(E^{per})\simeq B(E^{per}, E^{per})+L(\cdot, E^{per})+G$, defined by
$$
G= \int_{-\infty}^{\tau} e^{(\tau-s)L_{ss}^{\beta}} (W^{lim}\cdot\nabla  E^{lim,\perp}- V^{lim}\cdot\nabla  E^{lim})ds= G^{0,1}+ G^{0,2}
$$
 \begin{align}
 &L(\cdot, E^{per})= \int_{-\infty}^{\tau} e^{(\tau-s)L_{ss}^{\beta}} (W^{lim}\cdot\nabla E^{per,\perp} + W^{per}\cdot\nabla E^{lim,\perp}-V^{lim}\cdot\nabla E^{per}- V^{per}\cdot\nabla E^{lim})ds \nonumber \\
 & ~~~~~~~~~~~~~= L^{1,1}+L^{1,2}+L^{1,3}+L^{1,4} \nonumber
 \end{align}
 and
 $$
 B(E^{per}, E^{per})= \int_{-\infty}^{\tau} e^{(\tau-s)L_{ss}^{\beta}} (W^{per}\cdot\nabla E^{per,\perp}- V^{per}\cdot\nabla E^{per} )ds
 = B^{2,1}+B^{2,2},
 $$
 where $E^{\perp}= (W, V)$, $G\in \mathbb{X}$, $L: \mathbb{X}\rightarrow \mathbb{X}$ is a bounded linear operator and
 $B: \mathbb{X}\times \mathbb{X}\rightarrow \mathbb{X}$ is a bounded bilinear form. By a simple computation, we can obtain that
  \begin{equation}\label{e5.25}
 \|\mathcal{T}(E^{per}_{1}-E^{per}_{2})\|_{\mathbb{X}}\leq (2\|B\|+\|L\|)\|E^{per}_{1}-E^{per}_{2}\|_{\mathbb{X}}.
 \end{equation}
 To prove the operator $\mathcal{T}$ is contraction, it is sufficient for us to show $2\|B\|+\|L\|<1$.

 Firstly, we apply Lemma \ref{lem 5.5} to get
\begin{equation}\label{e5.26}
\| B^{2,1}\|_{H^{N+\frac{1}{2}}}
 \leq \int_{-\infty}^{\tau} (\tau-s)^{-\frac{3}{4}}e^{(\tau-s)(a+\delta)} \|(W^{per}\cdot\nabla E^{per,\perp} )\|_{H^{N-1}}(s) ds.
 \end{equation}
Since the space $ H^{N-1}$ is an algebra, then
 \begin{equation}\label{e5.27}
\|W^{per}\cdot\nabla E^{per,\perp} \|_{H^{N-1}}\leq C(N)\|E^{per}\|_{H^{N}}^{2}\leq e^{2(a+\varepsilon_{0})s} \|E^{per}\|_{\mathbb{X}}^{2}
 \end{equation}
 and, using that $ a+ 2\varepsilon_{0}-\delta > 0 $, we obtain
 \begin{align}\label{e5.28}
 \|B^{2,1}\|_{H^{N+\frac{1}{2}}} &\leq C(N, \delta)\|E^{per}\|_{\mathbb{X}}^{2} \int_{-\infty}^{\tau}
 \frac{e^{(\tau-s)(a+\delta)}e^{2(a+\varepsilon_{0})s}}{(\tau-s)^{\frac{1}{2}}} ds \leq C(N, \delta)e^{2(a+\varepsilon_{0})\tau} \|E^{per}\|_{\mathbb{X}}^{2}.
 \end{align}
 Hence, $\|B^{2,1}\|_{\mathbb{X}}\leq e^{T(a+\varepsilon_{0})}\|E^{per}\|_{\mathbb{X}}^{2}$.
 As a consequence of Lemma \ref{lem 5.5}, we get
 \begin{align}\label{e5.29}
 \| L^{1,1}\|_{H^{N}}
 &\leq M(N,\delta)\int_{-\infty}^{\tau} (\tau-s)^{-\frac{1}{2}}e^{(a+\varepsilon_{0})(\tau-s)} \|W^{lim}\cdot\nabla E^{per}\|_{H^{N-1}} ds  \nonumber \\
 & \leq M(N,\delta)\int_{-\infty}^{\tau} (\tau-s)^{-\frac{1}{2}}e^{(a+\varepsilon_{0})(\tau-s)}
 e^{(2a+\varepsilon_{0})s}\|E^{per}\|_{\mathbb{X}} ds
 \end{align}
 By employing Lemma \ref{lem 5.3} and Lemma \ref{lem 5.5}, we deduce
 \begin{equation}\label{e5.30}
 \| L^{1,1}\|_{H^{N}}\leq C(N, \delta,a) e^{(2a+\varepsilon_{0})\tau}\|E^{per}\|_{\mathbb{X}}
 \end{equation}
 and
 \begin{align}\label{e5.31}
 \| G^{0,1}\|_{H^{N}} &\leq  \int_{-\infty}^{\tau} e^{(a+\delta)(\tau-s)} \|(W^{lim}\cdot\nabla E^{lim,\perp})\|_{H^{N}}ds \nonumber \\
 & \leq \int_{-\infty}^{\tau}e^{(a+\delta)(\tau-s)} e^{2as} ds \leq C(N, \delta,a) e^{2a\tau},
 \end{align}
 provided $\delta < a$, as a consequence of Lemma \ref{lem 5.3} and Lemma \ref{lem 5.5}, which leads to the estimates
 \begin{equation*}
 \| L^{1,1}\|_{\mathbb{X}}\leq C(N, \delta,a) e^{aT}\|E^{per}\|_{\mathbb{X}},~~~\| G^{0,1}\|_{\mathbb{X}}\leq e^{(a-\varepsilon_{0})T}.
 \end{equation*}
Similar estimates can be carried on
 $B^{2,2}(V^{per}, E^{per})$, $L^{1,2}(W^{per}, E^{lim}), L^{1,3}(V^{lim}, E^{per}) $,\\ $ L^{1,4}(V^{per}, E^{lim}), G^{0,2}(V^{lim}, E^{lim})$.

Based on these estimates above, it follows that for $T$ sufficiently large and negative and $\|E^{per}\|_{\mathbb{X}}\leq 1$, we can make $2\|B\|+\|L\|<1$, which gives $\|\mathcal{T}(E^{per})\|_{\mathbb{X}}\leq 1$. That is to say, $\mathcal{T}|_{B(\mathbb{X})}$ is contractive. This finish the proof.
 \end{proof}

 In Theorem \ref{thm5.1}, $\beta \Xi_{0} $ is the solution of (\ref{e1.9}) by choosing the force $\widetilde{F}(\xi,\tau)$.
 Let $E^{per}\in \mathbb{X}$ be the unique fixed point of $\mathcal{T}$ guaranteed by Proposition \ref{pro5.6}
 and showed that $E^{per}$ decays as $e^{(a+\varepsilon_{0})\tau}$ as $\tau \rightarrow -\infty$. By induction, we can bootstrap it to $O(e^{2a\tau})$ decay in $H^{N}$
 for $N > \frac{5}{2}$.
 We can construct $E= \beta \Xi_{0} + E^{lim}+E^{per}$ solving (\ref{e1.9}) which is not equal to
 $\beta \Xi_{0} $. Finally, undoing the similarity variable transform for both $\beta \Xi_{0} $ and $E$ gives a pair of distinct solutions of (\ref{e1.1}).

%\section*{Acknowledgments}
%Wang was supported by National Key R$\&$D Program of China
%(No. 2022YFA1005601), National Natural Science Foundation of China (No. 12371114) and Outstanding Young
%foundation of Jiangsu Province (No. BK20200042). Xu was supported by the Postdoctoral program (2023M731381). Zhang was supported by National Natural Science Foundation of
%China (No. 12301133), the Postdoctoral Program (2023M741441) and Jiangsu Education Department (No. 23KJB110007).

\section*{Data Availability Statements}
Data sharing not applicable to this article as no datasets were generated or analysed during the current study.


\begin{thebibliography}{99}
\bibitem{TLQ}
T. Li and T. Qin, Physics and partial differential equations, Vol. II. Translated from the 2000 Chinese edition by Yachun Li. Society for Industrial and Applied Mathematics (SIAM), Philadelphia, PA; Higher Education Press, Beijing, 2014.

\bibitem{DBN}
D. Biskamp, Nonlinear Magnetohydrodynamics, Cambridge Monographs on Plasma Physics. Cambridge University Press, 1993.


\bibitem{DBp}
D. Biskamp, Magnetohydrodynamic Turbulence, Cambridge University Press, 2003.


\bibitem{PAD}
P. A. Davidson, An Introduction to Magnetohydrodynamics, Cambridge Texts in Applied Mathematics. Cambridge University Press, 2001.


\bibitem{MSR}
M. Sermange and R. Temam, Some mathematical questions related to the MHD equations, Comm. Pure Appl. Math., 36(5):635-664, 1983.

\bibitem{JL34}
J. Leray, Sur le mouvement d'un liquide visqueux emplissant l'espace, Acta Math., 63(1):193-248, 1934.

\bibitem{Hf51}
E. Hopf, $\ddot{\text{U}}$ber die Anfangswertaufgabe f$\ddot{\text{u}}$r die hydrodynamischen Grundgleichungen, Math. Nachr., 4:213-231, 1951.

\bibitem{DJL}
G.Duvaut and J.L.Lions, Inquations en thermolasticit et magnto-hydrodynmique, Arch. Rational Mech. Anal., 46:241-279, 1972.

\bibitem{MV1}
M. Vishik, Instability and non-uniqueness in the Cauchy problem for the Euler equations of an ideal incompressible fluid, part i, 2018.

\bibitem{MV2}
 M. Vishik, Instability and non-uniqueness in the Cauchy problem for the Euler equations of an ideal incompressible fluid, part ii, 2018.

\bibitem{ABC1}
D. Albritton, E. Bru$\acute{e}$, M. Colombo, C. De Lellis, V. Giri, M. Janisch, and H. Kwon, Instability and nonuniqueness for the 2d Euler equations in vorticity form, after M. Vishik, 2021.

\bibitem{ABC2}
D. Albritton, E. Bru$\acute{e}$ and M. Colombo, Non-uniqueness of Leray solutions of the forced Navier-Stokes equations, Annals of Mathematics, 196:415-455, 2022.

\bibitem{JS14}
H. Jia and V. $\breve{\text{S}}$ver$\acute{\text{a}}$k, Local-in-space estimates near initial time for weak solutions of the Navier-Stokes equations and forward self-similar solutions, Invent. Math., 196(1):233-265, 2014.

\bibitem{JS15}
 H. Jia and V. Sverak, Are the incompressible 3d Navier-Stokes equations locally ill-posed in the natural energy space?, J. Funct. Anal., 268(12):3734-3766, 2015.

\bibitem{GS17}
J. Guillod and V. $\breve{\text{S}}$ver$\acute{\text{a}}k$, Numerical investigations of non-uniqueness for the Navier-Stokes initial value problem in borderline spaces, arXiv:1704.00560.

\bibitem{BV}
T. Buckmaster and V. Vicol, Nonuniqueness of weak solutions to the Navier-Stokes equation, Ann. of Math. (2), 189(1):101-144, 2019.

\bibitem{BCV}
T. Buckmaster, M. Colombo and V. Vicol, Wild solutions of the navier-stokes equations whose singular sets in time have hausdorff dimension strictly less than 1, J. Eur. Math. Soc. (JEMS) 24, 2022.

\bibitem{BBV}
R. Beekie, T. Buckmaster and V. Vicol, Weak solutions of ideal MHD which do not conserve magnetic helicity, Ann. PDE, 6(1), 2020.

\bibitem{YZD}
Y. Li, Z. Zeng and D. Zhang, Non-uniqueness of weak solutions to 3D magnetohydrodynamic equations, J. Math. Pures Appl. (9), 165:232-285, 2022.

\bibitem{YZL}
Y. Li, Z. Zeng and D. Zhag, Sharp non-uniqueness of weak solutions to 3D magnetohydrodynamc equations, arXiv:2208.00624.

\bibitem{Lin}
Z. Lin and Y. Wang, Dynamical magneto-rotational instability, arXiv:2310.10075.

\bibitem{Me93}
A. J. Meir, The equations of stationary, incompressible magnetohydrodynamics with mixed boundary conditions, Comput. Math. Appl., 25(12):13-29, 1993.

\bibitem{HT98}
H. Tasso and G. N. Throumoulopoulos, Axisymmetric ideal magnetohydrodynamic equilibria with incompressible flows, Physics of Plasmas, 5(6):2378-2383, 1998.

\bibitem{LR}
L. Rayleigh, On the stability, or instability, of certain fluid motions. Proc. London Math. Soc., 11:57-70, 1880.

\bibitem{DR}
P. G. Drazin and W. H. Reid. Hydrodynamic stability, Cambridge
University Press, Cambridge, 2004.

\bibitem{JLS}
J. L. Synge, The stability of heterogeneous liquids, Trans. Roy. Soc. Canada, 27(3):1-18, 1933.

\bibitem{EPV}
E. P. Velikhov, Stability of an Ideally Conducting Liquid Flowing Between Cylinders Rotating in a Magnetic Field, J. Exptl. Theoret. Phys., 36:1398-1404, 1959.

\bibitem{SC}
S. Chandrasekhar, The stability of non-dissipative Couette flow in hydromagnetics, Proc. Natl. Acad. Sci., 46(2):253-257, 1960.

\bibitem{SB}
S. Balbus, Enhanced angular momentum transport in accretion disks,
Annu. Rev. Astron. Astrophys, 41, 555-597, 2003.

\bibitem{BH91}
S. Balbus and J. Hawley, A powerful local shear instability in weakly magnetized disks. I-Linear analysis. II-Nonlinear evolution, The Astrophysical Journal, 376: 214-233, 1991.

\bibitem{BH98}
S. Balbus and J. Hawley, Instability, turbulence, and enhanced transport in accretion disk, Reviews of Modern Physics, 70(1):1-53, 1998.

\bibitem{JMS}
J. M. Stone, Astrophysical magnetogydrodynamics, Bull. Astr. Soc. India, 39:129-143, 2011.

\bibitem{LZ}
Z. Lin and C. Zeng, Separable Hamiltonian PDEs and turning point principle for stability of gaseous stars, Comm. Pure Appl. Math., 75(11):2511-2572, 2022.

\bibitem{EN00}
K.-J. Engel and R. Nagel, One-Parameter Semigroups for Linear Evolution Equations, Grad. Texts in Math. 194, Springer-Verlag, New York, 2000

\bibitem{Kato}
T. Kato, Perturbation theory for linear operators, Die Grundlehren der mathematischen Wissenschaften, Band 132. Springer-Verlag New York, Inc., New York, 1966.

\bibitem{Tsai}
T. P. Tsai, Lectures on Navier-Stokes equtions, Graduate Studies in Mathematics, 192. American Mathematical Society, Providence, RI, 2018.

\end{thebibliography}
\end{document}